\newcommand{\mylabel}[2]{#2\def\@currentlabel{#2}\label{#1}}
\let\oldbibliography\thebibliography
\renewcommand{\thebibliography}[1]{%
  \oldbibliography{#1}%
  \setlength{\itemsep}{0.5mm}%
}
\newtheorem{thm}{Theorem}[section]
\newtheorem{lem}[thm]{Lemma}
\newtheorem{defn}[thm]{Definition}
\def\R{\mathbb{R}}
\def\E{\mathbb{E}}
\def\P{\mathbb{P}}
\def\C{\mathbb{C}}
\def\N{\mathbb{N}}
\def\Z{\mathbb{Z}}
\def\I{\infty}
\def\Id{{\textnormal{Id}}}
\def\Cov{{\textnormal{Cov}}}
\def\Tr{{\textnormal{Tr}}}
\def\txtd{{\textnormal{d}}}
\def\txte{{\textnormal{e}}}
\def\txtl{{\textnormal{l}}}
\def\txtD{{\textnormal{D}}}
\newcommand{\be}{\begin{equation}}
\newcommand{\ee}{\end{equation}}
\newcommand{\bea}{\begin{eqnarray}}
\newcommand{\eea}{\end{eqnarray}}
\newcommand{\beann}{\begin{eqnarray*}}
\newcommand{\eeann}{\end{eqnarray*}}
\newcommand{\benn}{\begin{equation*}}
\newcommand{\eenn}{\end{equation*}}
\DeclareMathSymbol{\leqsymb}{\mathalpha}{AMSa}{"36}
\DeclareMathSymbol{\geqsymb}{\mathalpha}{AMSa}{"3E}
\def\ra{\rightarrow}
\def\I{\infty}
\newcommand{\cA}{{\mathcal A}}  
\newcommand{\cB}{{\mathcal B}}  
\newcommand{\cC}{{\mathcal C}}  
\newcommand{\cD}{{\mathcal D}}  
\newcommand{\cF}{{\mathcal F}}  
\newcommand{\cG}{{\mathcal G}}  
\newcommand{\cH}{{\mathcal H}}  
\newcommand{\cI}{{\mathcal I}}  
\newcommand{\cJ}{{\mathcal J}}  
\newcommand{\cL}{{\mathcal L}}  
\newcommand{\cO}{{\mathcal O}}  
\newcommand{\cS}{{\mathcal S}}  
\newcommand{\cU}{{\mathcal U}}  
\newcommand{\cW}{{\mathcal W}}  
\newcommand{\cZ}{{\mathcal Z}}  
\newcommand{\fr}{{\mathfrak r}}  
\begin{document}

\title{Combined Error Estimates\\ for Local Fluctuations of SPDEs}
\author{Christian Kuehn and Patrick K\"urschner}
\date{\today}   

\maketitle

\begin{abstract}
In this work, we study the numerical approximation of local fluctuations of
certain classes of parabolic stochastic partial differential equations (SPDEs). 
Our focus is on effects for small spatially-correlated noise on a time scale 
before large deviation effects have occurred. In particular, we are interested 
in the local directions of the noise described by a covariance operator. We 
introduce a new strategy and prove a Combined ERror EStimate (CERES) for the 
five main errors: the spatial discretization error, the local linearization 
error, the noise truncation error, the local relaxation error to steady state, 
and the approximation error via an iterative low-rank matrix algorithm. In 
summary, we obtain one CERES describing, apart from modelling of the original 
equations and standard round-off, all sources of error for a local fluctuation 
analysis of an SPDE in one estimate. To prove our results, we rely on a 
combination of methods from optimal Galerkin approximation of SPDEs, covariance 
moment estimates, analytical techniques for Lyapunov equations, iterative
numerical schemes for low-rank solution of Lyapunov equations, and working
with related spectral norms for different classes of operators.
\end{abstract}

\textbf{Keywords:} stochastic partial differential equation, stochastic dynamics, 
combined error estimates, optimal regularity, Lyapunov equation, low-rank 
approximation, local fluctuations.

\section{Introduction}
\label{sec:intro}

This work has two main goals. The first - more abstract - goal is to establish 
a general strategy to find and prove Combined ERror EStimates (CERES)\footnote{CERES 
is not only a direct acronym for the main strategy of this work but also contains a
nice historical note: The asteroid Ceres was predicted by Gau\ss~using the method
of least-squares, which also 'combines' the minimization of several errors to a 
single function.} for dynamical systems involving several sources of error. 
The second - more specific - goal is to demonstrate CERES for a concrete challenge 
of an infinite-dimensional stochastic problem. The technically precise formulation
of our work starts in Section~\ref{sec:SPDE}. In this introduction we provide a 
basic overview of our strategy and our main results. We focus on the evolution 
equation  
\be
\label{eq:SPDEintro}
\txtd u=\left[Au+f(u)\right]~\txtd t + g(u)~\txtd W,
\ee
where $W=W(x,t)$ is a certain white-noise process, $A$ is a suitable linear operator, 
$f,g$ are given maps and $u=u(x,t)\in\R$ is the unknown function~\cite{DaPratoZabczyk}. As
a paradigmatic example, one may think of the Laplacian $A=\Delta$, $W$ as a $Q$-Wiener 
process with trace-class operator $Q$ and $f,g$ as sufficiently smooth Lipschitz 
functions. Suppose the deterministic problem, i.e.~$g(u)\equiv 0$, has a steady state 
solution $u=u^*$, which solves $0=Au^*+f(u^*)$. Suppose the steady state is linearly 
stable, which just means that the spectrum of the linear operator 
\be
\label{eq:linAintro}
\tilde{A}:=A+\txtD_uf(u^*) 
\ee
is properly contained in the left-half of the complex plane~\cite{Kielhoefer,Robinson1}.
Note that since $f$ is scalar-valued, we may also write $\txtD_uf(u^*)=f'(u^*)\Id$ so 
that if $A$ is symmetric then so is $\tilde{A}$. 
Assume that the initial condition $u(x,0)$ is very close to $u^*$ and the 
noise in~\eqref{eq:SPDEintro} is sufficiently small, $0<\|g(u)\|=:\psi\leq 1$, 
in comparison to the spectral gap of $\tilde{A}$ to the imaginary axis. Then 
the probability is extremely large that one observes only local fluctuations 
of sample paths of the SPDE~\eqref{eq:SPDEintro} near $u^*$ on very long time 
scales. Only when the time scale reaches roughly order $\cO(\txte^{c/\psi^2})$ 
for some constant $c>0$ as $\psi\ra 0$, large deviation 
effects~\cite{WentzellFreidlin,DaPratoZabczyk} occur. Here we focus on the 
initial scale of fluctuations, which we refer to as sub-exponential scale, on which
large deviation effects do not play a role. However, the noise 
does still play an important role near the steady state. Its interplay with
the operator $\tilde{A}$ determines the directions, in which we are going to 
find the process with higher probability locally near $u^*$. This raises the 
question, how to numerically compute these directions. A recent practical 
strategy suggested in the context of a numerical 
continuation~\cite{KrauskopfOsingaGalan-Vioque} framework for stochastic 
ordinary differential equations (SODEs)~\cite{KuehnSDEcont1} and then extended 
for SPDEs~\cite{KuehnSPDEcont} is to: 

\begin{itemize} 
 \item[(S1)] spatially discretize $u$ with approximation level $h$ and consider 
the resulting SODEs for $u_h\in\R^N$. Note that we are going to view $u_h$ as 
a vector but also use the notation for the associated function expressed via
the basis of a finite-dimensional spatial approximation space;
 \item[(S2)] locally linearize the SODEs around $u^*_h\approx u^*$ and consider SODEs for 
the linear approximation $\tilde{U}_h\in\R^N$, which form an Ornstein-Uhlenbeck (OU) 
process~\cite{Gardiner};
 \item[(S3)] truncate the noise term based upon the decay of the $Q$-Wiener process
and consider a reduced linear OU process $U_h\in\R^N$;
 \item[(S4)] take the covariance matrix $V_h=V_h(t)$ of the OU process $U_h$,
note that $V_h$ satisfies a time-dependent Lyapunov equation~\cite{Antoulas,Simoncini}, 
and show that $V_h$ converges quickly in time to a stationary Lyapunov equation for a 
matrix $V_*\in\R^{N\times N}$;
 \item[(S5)] compute a low-rank approximation $V_*\approx \cZ\cZ^\top$ 
using a specialized iterative methods to generate $\cZ\in\R^{N\times \fr}$ with $\fr\ll N$; 
$j$ computation steps of an iterative algorithm yield a matrix $V_j\approx V_*$.
\end{itemize}

Every step (S1)-(S5) produces an error, i.e., the final matrix $V_j$ only provides 
an approximation to the infinite-dimensional covariance operator 
$\textnormal{Cov}(u)$~\cite{GoldysVanNeerven}, which is precisely the 
operator describing the different fluctuation directions. In summary, one should 
aim for a result of the form
\be
\label{eq:goal}
\sup_{t\in[0,T]}\|\textnormal{Cov}(u)-V_j\|\leq 
\substack{\text{error in} \\\text{Step (S1)}}  +
\substack{\text{error in} \\\text{Step (S2)}}  +
\substack{\text{error in} \\\text{Step (S3)}}  +
\substack{\text{error in} \\\text{Step (S4)}}  +
\substack{\text{error in} \\\text{Step (S5)}}  
\ee 
to really judge the quality from the viewpoint of numerical analysis. In
~\eqref{eq:goal} and similar comparison problems we are always going to 
view finite-dimensional operators such as $V_j$
as infinite-dimensional operators by using an embedding via a basis of the
function space on which $\textnormal{Cov}(u)$ is defined. 
Equation~\eqref{eq:goal} is just a prototypical example, i.e., a chain of 
different error terms does occur in most challenging high-dimensional
problems, particularly those involving stochastic aspects.\medskip 

{\small \textbf{Remark:} \textit{We do not consider in~\eqref{eq:goal} 
the modelling error of SPDEs of the form~\eqref{eq:SPDEintro}. On the 
one hand, it is partially included in a stochastic formulation anyway, 
and on the other hand, it is always possible to argue in an application, 
whether other terms or effects matter. The second error we do not include 
in the CERES is the standard numerical round-off error, which is universal 
for a given precision.}}\medskip

From a technical viewpoint, each step demands different techniques and then
a combination of the different estimates. For this CERES, we decided to 
consider the spectral or $2$-norm $\|\cdot\|_2$ on the infinite- as well
as finite-dimensional levels as well as the associated derived operator 
norm. This simplifies computing a CERES considerably
as one may use the standard triangle inequality and suitable embeddings for 
the expression 
\be
\|\underbrace{\textnormal{Cov}(u)-\textnormal{Cov}(u_h)}_{\textnormal{Step (S1)}}
+\underbrace{\textnormal{Cov}(u_h)-\textnormal{Cov}(\tilde{U}_h)}_{\textnormal{Step (S2)}}
+\underbrace{\textnormal{Cov}(\tilde{U}_h)-V_h}_{\textnormal{Step (S3)}}
+\underbrace{V_h-V_*}_{\textnormal{Step (S4)}}
+\underbrace{V_*-V_j}_{\textnormal{Step (S5)}}\|.
\ee
For (S1), we rely on an extension to covariance operators of optimal error
estimates for Galerkin finite elements methods for SPDEs~\cite{Kruse,KruseLarsson}.
(S2) is treated via small noise approximation in combination with moment
equations~\cite{Socha,KuehnMC}. (S3) is covered by standard growth estimates
for $Q$-Wiener processes over a finite time scale~\cite{DaPratoZabczyk}. (S4) is 
tackled by results on spectra for Lyapunov equations~\cite{Bellman} and decay of the
time-dependent problem~\cite{HuLiu}. (S5) requires a careful
tracing of error estimates for low-rank versions of iterative algorithms, such 
as alternating direction implicit (ADI)~\cite{LiWhite} and rational Krylov
methods~\cite{DruskinKnizhnermanSimoncini}. 

Our final result for~\eqref{eq:goal} is summarized in Theorem~\ref{thm:main}. It
illustrates that many factors can influence the error. For example, the spatial 
resolution $h$, the final time $T$, the Lipschitz constants of $f,g$, the 
structure of the operator $Q$, the spectrum of $\tilde{A}$, the noise truncation
level $R$, and the low-rank $\fr$ all appear in some form in the final error. 
Therefore, \emph{balancing} a CERES is the key practical message of our work. 
Just making a spatial resolution $h$ small or a dynamical error small by taking 
higher-order terms into account may not be enough in practice, i.e., one has to 
be aware, which error term dominates a CERES.\medskip

We highlight that extensive numerical continuation calculations, practical
convergence tests, as well as large-scale examples already exist for the method
to approximate local fluctuations numerically as proposed here. We refer the 
interested reader to~\cite{KuehnSDEcont1} for an introduction and the SODE case,
to~\cite{KuehnSPDEcont} for the SPDE case including precise numerical comparisons to
theoretical scaling laws, and to~\cite{Baarsetal} for a successful large-scale
application in geoscience. In this work, we focus on the theoretical aspects,
thereby finishing/completing the previous work on scientific computing side of
the method.\medskip 

The paper is structured as follows: In Section~\ref{sec:SPDE} we provide the
foundational technical setup for SPDEs~\eqref{eq:SPDEintro} using mild
solutions and in Section~\ref{sec:discretize} we cover the spatial 
discretization. Section~\ref{sec:SPDEcov} develops the error estimates for 
covariance matrices while Section~\ref{sec:SODE} contains the relevant 
moment estimates. Then we discuss the noise truncation in Section~\ref{sec:truncate}. 
In Section~\ref{sec:Lyapunov}, we transition to the Lyapunov
equation and its reduction to steady state. The last technical step is 
carried out in Section~\ref{sec:iteration} tracing the error results for
low-rank iterative schemes for Lyapunov equations. In Section~\ref{sec:result},
we present the full CERES, which draws upon all the previous results. An
outlook to open problems and further applications of our methodology is
given in Section~\ref{sec:outlook}.

\section{SPDE - Mild Solutions}
\label{sec:SPDE}

Let $\cD\subset \R^d$ be a bounded domain with smooth boundary $\partial \cD$ and denote the 
spatial variable by $x\in\cD$. Consider a fixed compact time interval $\cI=[0,T]$, $t\in\cI$ 
and a filtered probability space $(\Omega,\cF,(\cF_t)_{t\in\cI},\P)$. Furthermore, we fix two Hilbert
spaces $\cH$ and $\cU$ with inner products $\langle\cdot,\cdot\rangle_\cH$ and 
$\langle\cdot,\cdot\rangle_\cU$ respectively. Let $Q\in \cL(\cU,\cU)$ be a symmetric 
non-negative linear operator and let $(W(t))_{t\in \cI}$ denote the associated $Q$-Wiener process;
see~\cite[Section~4.1]{DaPratoZabczyk} for the definitions when $Q$ is of trace class 
and~\cite[Section~4.3]{DaPratoZabczyk} for the case of a cylindrical Wiener process. Let 
$\cU_0:=Q^{1/2}\cU$ be a Hilbert space~\cite{DaPratoZabczyk,PrevotRoeckner} with the 
inner product $\langle\cdot,\cdot\rangle_{\cU_0}:=\langle Q^{-1/2}\cdot, 
Q^{-1/2}\cdot\rangle_\cU$, where $Q^{-1/2}$ is the Moore-Penrose
pseudoinverse of $Q$. An operator $M:\cU_0\ra \cH$ is a Hilbert-Schmidt operator if
the norm
\benn
\|M\|_{\cL_2^0}:=\left(\sum_{k=1}^\I \|M\zeta_k\|_{\cH}^2\right)^{1/2}
\eenn
is finite, where the choice of orthonormal basis $\{\zeta_k\}_{k=1}^\I$ for $\cU_0$ turns 
out to be arbitrary. The space of these Hilbert-Schmidt operators will be denoted accordingly 
by $\cL_2^0$. If we consider Hilbert-Schmidt operators on $\cH$, then they will be 
denoted by $\cL_2=\cL_2(\cH,\cH)$.

Let $u(x,t;\omega)$ for $\omega\in\Omega$ denote the unknown family of random variables 
$u:\cD\times \cI\times \Omega\ra \R$. In the notation we shall always suppress $\omega$
from now on and assume that all maps we define are measurable with respect to $\omega$, 
which will also imply measurability for $u$ below. We write $u(t)=u(\cdot,t)\in \cH$, 
$W=W(\cdot,t)$, and 
study the SPDE
\be
\label{eq:SPDE}
\txtd u(t)=\left[Au(t)+f(u(t))\right]~\txtd t + g(u(t))~\txtd W(t),\qquad u(0)=u_0,
\ee
as an evolution equation on the Hilbert space $\cH$, which is taken as a suitable function
space on $\cD$. 

\begin{enumerate}
 \item[\mylabel{a:operator}{(A0)}] We assume that $Q$ is of trace class. Furthermore, 
we assume the operator $A:\textnormal{dom}(A)\subset 
\cH\ra \cH$ is linear, self-adjoint negative definite operator with compact inverse and 
generates an analytic semigroup $t\mapsto \txte^{tA}$~\cite{Henry}. 
\end{enumerate}

(A0) implies that there exists an orthonormal basis of eigenvectors of $A$ for $\cH$. 
The maps $f,g$ are going to be specified more precisely below and the initial condition 
$u_0\in\cH$ is a random variable. A mild solution $u(t)$ to~\eqref{eq:SPDE} satisfies
\be
\label{eq:SPDEmild}
u(t)=\txte^{tA}u_0+\int_0^t \txte^{(t-s)A}f(u(s))~\txtd s+\int_0^t\txte^{(t-s)A} 
g(u(s))~\txtd W(s),
\ee
i.e., the integral equation~\eqref{eq:SPDEmild} holds $\P$-almost surely ($\P$-a.s.) 
for $t\in\cI$ and 
\benn
\P\left(\int_0^t |u(s)|^2~\txtd s<+\I\right)=1, \qquad \text{$\P$-a.s.;}
\eenn
see also~\cite[Chapter~4]{DaPratoZabczyk} or~\cite{DalangQuerSardanyons} for the construction 
of the stochastic integral. It is well-known that under certain Lipschitz 
assumptions~\cite[Section~7.1]{DaPratoZabczyk} or dissipativity 
assumptions~\cite[Section~7.4.2]{DaPratoZabczyk} on $f,g$, there exists a unique mild solution.
However, since we are interested in numerical error estimates, it is important to have optimal
regularity results for mild solutions so we follow~\cite{Kruse,KruseLarsson,JentzenRoeckner1}.
Denote by $\{a_k\}_{k=1}^\I$, $0>a_1\geq a_2\geq \cdots$ the eigenvalues of $A$ and by 
$e_k$ the associated eigenfunctions with $A e_k=a_ke_k$. For $r\in\R$, define the fractional 
operator $A^{r/2}:\textnormal{dom}(A^{r/2})\ra \cH$ by
\benn
A^{r/2}v:=-(-A)^{r/2}v=-\sum_{k=1}^\I (-a_k)^{r/2}~ \langle v,e_k\rangle_\cH~ e_k.
\eenn
Set $\dot{\cH}^r:=\textnormal{dom}(A^{r/2})$ and consider the norm $\|v\|_{\dot{\cH}^r}
:=\|A^{r/2}v\|_\cH$, which turns $\dot{\cH}^r$ into a Hilbert space. Let 
$\cL^0_{2,r}\subset \cL_2^0$ denote the subspace of Hilbert-Schmidt operators, which have
finite norm $\|\cdot \|_{\cL^0_{2,r}}:=\|A^{r/2}\cdot\|_{\cL_2^0}$. The following 
assumptions are assumed to hold from now on (although we are still going to emphasize this 
several times in statements of theorems below):

\begin{itemize}
 \item[\mylabel{a:initial}{(A1)}] Fix two constants $r\in[0,1)$ and $p\in[2,\I)$ for all 
 assumptions. Fix an initial condition $u_0:\Omega\ra\dot{\cH}^{r+1}$ and assume 
it is $\cF_0$-measurable with 
 \benn
 \left[\E\left(\|u_0\|_{\dot{\cH}^{r+1}}^p\right)\right]^{1/p}\leq C_{\textnormal{ini},r}<+\I.
 \eenn
 \item[\mylabel{a:f}{(A2)}] $f:\cH\ra \dot{\cH}^{r-1}$ and there exists a constant $C_f>0$ 
 such that
 \be
 \label{eq:Lipf}
 \|f(u)-f(v)\|_{\dot{\cH}^{r-1}}\leq C_f\|u-v\|_\cH,\quad \text{for all $u,v\in\cH$.}
 \ee
 \item[\mylabel{a:g}{(A3)}] $g:\cH\ra \cL_2^0$ and there exists a constant $C_{g,1}>0$ 
 such that
 \be
 \label{eq:Lipg1}
 \|g(u)-g(v)\|_{\cL_2^0}\leq C_{g,1}\|u-v\|_\cH,\quad \text{for all $u,v\in\cH$,}
 \ee
 and furthermore $g(\dot{\cH}^r)\subset \cL^0_{2,r}$ holds with the estimate
  \be
 \label{eq:Lipg2}
 \|g(u)\|_{\cL_{2,r}^0}\leq C_{g,2}(1+\|u\|_{\dot{\cH}^r}),\quad \text{for all $u\in\cH$,}
 \ee
 and some constant $C_{g,2}>0$.
\end{itemize}

Essentially \ref{a:f}-\ref{a:g} are modifications/extensions of the classical Lipschitz 
assumptions in~\cite{DaPratoZabczyk}. Therefore, one immediately gets:

\begin{thm}[{\cite[Theorem~7.4]{DaPratoZabczyk},\cite[Theorem~1]{JentzenRoeckner}}]
\label{thm:existence}
Suppose \ref{a:operator}-\ref{a:g} hold, then there 
exists a unique mild solution for the SPDE~\eqref{eq:SPDE}. 
\end{thm}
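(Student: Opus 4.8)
The plan is to realize the mild solution as the unique fixed point of the integral operator defined by the right-hand side of~\eqref{eq:SPDEmild}, via the Banach contraction principle on a suitable space of predictable processes. First I would fix the solution space $\cZ_p$ of $\cH$-valued predictable processes $v$ on $\cI$ with finite norm
\be
\|v\|_{\cZ_p}:=\sup_{t\in\cI}\E\left[\|v(t)\|_\cH^p\right]^{1/p},
\ee
which is a Banach space, and define the map $\Phi$ by
\be
(\Phi v)(t):=\txte^{tA}u_0+\int_0^t \txte^{(t-s)A}f(v(s))~\txtd s+\int_0^t \txte^{(t-s)A}g(v(s))~\txtd W(s),
\ee
so that a fixed point of $\Phi$ is precisely a mild solution. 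The central analytic tool is the smoothing property of the analytic semigroup: since $-A$ is positive definite with spectral gap $|a_1|>0$ by~\ref{a:operator}, one has $\|(-A)^{\sigma}\txte^{tA}\|_{\cL(\cH)}\leq C_\sigma\, t^{-\sigma}\txte^{-|a_1|t}$ for $\sigma\geq0$, which is what lets us absorb the loss of regularity incurred by $f,g$ mapping into the negative- and fractional-order spaces $\dot{\cH}^{r-1}$ and $\cL^0_{2,r}$.

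Next I would verify that $\Phi$ maps $\cZ_p$ into itself by estimating the three terms separately. The semigroup term is controlled using~\ref{a:initial} together with the contractivity of $\txte^{tA}$. For the drift I would combine the Lipschitz/growth bound~\ref{a:f} with the smoothing estimate, so that $\|\txte^{(t-s)A}f(v(s))\|_\cH\lesssim (t-s)^{-(1-r)/2}\|f(v(s))\|_{\dot{\cH}^{r-1}}$, where the singularity $(t-s)^{-(1-r)/2}$ is integrable because $r\in[0,1)$ by~\ref{a:initial}. For the stochastic convolution I would invoke the Da Prato--Kwapie\'n--Zabczyk factorization method together with the Burkholder--Davis--Gundy inequality, reducing the $p$-th moment of the convolution to an $L^p$-estimate of a deterministic singular integral of $\|g(v(s))\|_{\cL^0_{2}}$, which is controlled via the Hilbert--Schmidt bounds~\ref{a:g}; here the analyticity of the semigroup and the restriction $p\geq2$ are exactly what make the factorization exponents admissible.

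The contraction estimate then follows the same template applied to $\Phi v-\Phi w$: the term $\txte^{tA}u_0$ cancels, while the drift and diffusion differences are controlled by $C_f$ and $C_{g,1}$ through the linear Lipschitz bounds~\eqref{eq:Lipf} and~\eqref{eq:Lipg1}. This yields $\|\Phi v-\Phi w\|_{\cZ_p}\leq L(T)\|v-w\|_{\cZ_p}$ with $L(T)\to0$ as $T\to0$; one then either shrinks $T$ to make $\Phi$ a strict contraction and patches the local solutions together by restarting from the terminal value and invoking uniqueness, or, more cleanly, replaces the supremum norm by an exponentially weighted norm $\sup_{t\in\cI}\txte^{-\beta t}\E[\|v(t)\|_\cH^p]^{1/p}$ and chooses $\beta$ large so that $\Phi$ contracts on all of $\cI$ at once. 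The Banach fixed-point theorem then delivers a unique fixed point, i.e.\ a unique mild solution, establishing the claim.

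I expect the stochastic-convolution step to be the main obstacle. Unlike the pathwise drift term, it must be bounded at the level of moments while simultaneously respecting the Hilbert--Schmidt structure encoded in~\ref{a:g} and the fractional smoothing of $\txte^{tA}$. Placing the factorization exponent strictly inside the admissible window---so that both the semigroup singularity and the resulting deterministic convolution remain $L^p$-integrable for the given $r$ and $p$---is the delicate balancing point, and it is precisely there that assumptions~\ref{a:initial}--\ref{a:g} have been calibrated.
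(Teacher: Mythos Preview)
The paper does not give its own proof of this theorem: it is stated as a cited result from~\cite[Thm.~7.4]{DaPratoZabczyk} and~\cite[Thm.~1]{JentzenRoeckner}, with no argument supplied. Your proposal is the standard Banach fixed-point argument on a space of predictable $L^p$-processes, using analytic-semigroup smoothing to absorb the $\dot{\cH}^{r-1}$-loss in the drift and factorization/BDG for the stochastic convolution; this is exactly the approach of the cited references, so there is nothing to compare.

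One minor remark: in the self-mapping step for the stochastic convolution you only need the $\cL_2^0$-bound on $g$ (the linear growth in $\cH$ implied by~\eqref{eq:Lipg1}), not the stronger $\cL_{2,r}^0$-bound~\eqref{eq:Lipg2}, which is used for the regularity result in Theorem~\ref{thm:regularity} rather than for existence. Otherwise your outline is correct and complete.
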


In addition, one may provide (optimal) regularity estimates for the mild solution:

\begin{thm}[{\cite[Theorem~3.1~\&~Theorem~4.1]{KruseLarsson}}]
\label{thm:regularity}
Suppose \ref{a:operator}-\ref{a:g} hold then the unique mild solution 
is almost surely in $\dot{\cH}^{r+1}$. There exists a constant $C_{\textnormal{spa}}>0$ 
such that
\be
\label{eq:spacereg}
\sup_{t\in\cI}\left(\E\left[\|u(t)\|^p_{\dot{\cH}^{r+1}}\right]\right)^{1/p}\leq 
C_{\textnormal{ini},r} + C_{\textnormal{spa}}\left(1+
\sup_{t\in\cI}\left(\E\left[\|u(t)\|^p_{\dot{\cH}^{r}}\right]\right)^{1/p}\right)
\ee
holds as a spatial regularity estimate and for temporal regularity we have
\be
\label{eq:tempreg}
\sup_{t_1,t_2\in\cI,t_1\neq t_2} \left(\E\left[\|u(t_1)-
u(t_2)\|_{\dot{\cH}^{s}}^p\right]\right)^{1/p}\leq 
C_{\textnormal{time}}|t_1-t_2|^{\min(\frac12,\frac{1+r-s}{2})}
\ee
for some constant $C_{\textnormal{time}}>0$ and for every $s\in[0,r+1)$.
\end{thm}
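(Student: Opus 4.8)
The plan is to substitute the mild-solution representation \eqref{eq:SPDEmild} and estimate each of its three constituents --- the homogeneous term $\txte^{tA}u_0$, the deterministic convolution $\int_0^t\txte^{(t-s)A}f(u(s))\,\txtd s$, and the stochastic convolution $\int_0^t\txte^{(t-s)A}g(u(s))\,\txtd W(s)$ --- separately in the $L^p(\Omega)$-norm of the relevant $\dot{\cH}$-scale. The two deterministic workhorses are the smoothing and H\"older estimates for the analytic semigroup generated by $A$, which is a contraction since all eigenvalues $a_k<0$: for $\mu\ge0$ and $\rho\in[0,1]$,
\[
\|(-A)^{\mu}\txte^{tA}\|_{\cL(\cH)}\le C_\mu\,t^{-\mu},\qquad \|(-A)^{-\rho}(\txte^{tA}-\Id)\|_{\cL(\cH)}\le C_\rho\,t^{\rho},
\]
both following from the spectral representation $\txte^{tA}=\sum_k\txte^{a_kt}\langle\cdot,e_k\rangle e_k$ and elementary bounds on $\lambda\mapsto\lambda^\mu\txte^{-\lambda t}$. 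The stochastic ingredient is the Burkholder--Davis--Gundy inequality in the Hilbert--Schmidt framework, which for a predictable $\Phi$ gives
\[
\Big(\E\big\|\int_0^t\Phi(s)\,\txtd W(s)\big\|_{\dot{\cH}^\sigma}^p\Big)^{1/p}\le C_p\Big(\int_0^t\big(\E\|\Phi(s)\|_{\cL^0_{2,\sigma}}^p\big)^{2/p}\,\txtd s\Big)^{1/2}
\]
after Minkowski's integral inequality.

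For the spatial estimate \eqref{eq:spacereg} I would apply $(-A)^{(r+1)/2}$ to each term. The homogeneous term is immediate from contractivity, $\|\txte^{tA}u_0\|_{\dot{\cH}^{r+1}}\le\|u_0\|_{\dot{\cH}^{r+1}}$, which upon taking $L^p$-moments yields the constant $C_{\textnormal{ini},r}$ via \ref{a:initial}. For the deterministic convolution I would factor $(-A)^{(r+1)/2}\txte^{(t-s)A}=(-A)\txte^{(t-s)A}(-A)^{(r-1)/2}$ and bound $\|f(u(s))\|_{\dot{\cH}^{r-1}}\le\|f(0)\|_{\dot{\cH}^{r-1}}+C_f\|u(s)\|_{\cH}$ by \ref{a:f}; since the resulting smoothing exponent sits exactly at the non-integrable endpoint, the near-diagonal singularity must be absorbed either through the cancellation identity $\int_0^t(-A)\txte^{(t-s)A}\,\txtd s=\Id-\txte^{tA}$ (subtracting the frozen value $f(u(t))$ and invoking \eqref{eq:tempreg}) or, more simply, through the milder gain available if $f$ is read as an $\cH$-valued map. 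The crux is the stochastic convolution, where an operator-norm bound would again fail at $\dot{\cH}^{r+1}$; the resolution is to keep the full Hilbert--Schmidt norm inside the It\^o isometry and exploit the exact cancellation
\[
\int_0^t(-a_k)^{r+1}\txte^{2a_k(t-s)}\,\txtd s=\tfrac12(-a_k)^{r}\big(1-\txte^{2a_kt}\big)\le\tfrac12(-a_k)^{r},
\]
which trades the spatial weight $(-a_k)^{r+1}$ for $(-a_k)^{r}$ against the time integral, so that $\int_0^t\|\txte^{(t-s)A}g(u(s))\|_{\cL^0_{2,r+1}}^2\,\txtd s\le C\sup_s\|g(u(s))\|_{\cL^0_{2,r}}^2$ and \ref{a:g} closes the bound in terms of $\sup_t(\E\|u(t)\|_{\dot{\cH}^r}^p)^{1/p}$.

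For the temporal estimate \eqref{eq:tempreg} with $t_1<t_2$ I would write each of the three differences and split the two convolutions into a ``new'' piece over $[t_1,t_2]$ and an ``old'' piece over $[0,t_1]$ carrying the semigroup difference $\txte^{(t_1-\sigma)A}(\txte^{(t_2-t_1)A}-\Id)$. The homogeneous and old pieces are handled by the H\"older estimate above, each producing a factor $|t_2-t_1|^{(1+r-s)/2}$ once the available smoothing is balanced against the order loss from $\dot{\cH}^{r+1}$ down to $\dot{\cH}^s$. The new stochastic piece is where the exponent $\tfrac12$ enters: by BDG and the embedding $\cL^0_{2,r}\hookrightarrow\cL^0_{2,s}$ valid for $s\le r$, one gets $\int_{t_1}^{t_2}\|\txte^{(t_2-\sigma)A}g\|_{\cL^0_{2,s}}^2\,\txtd\sigma\le C|t_2-t_1|$, i.e.\ Brownian scaling, whereas for $s>r$ the smoothing needed to reach $\dot{\cH}^s$ from $\cL^0_{2,r}$ lowers the integrable rate to $(1+r-s)/2$; the two regimes assemble into the stated $\min(\tfrac12,\tfrac{1+r-s}{2})$.

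I expect the main obstacle to be the stochastic convolution in both halves: securing the borderline spatial regularity at the full order $r+1$ (which hinges on combining the It\^o isometry with the telescoping cancellation above rather than any operator-norm bound) and extracting the sharp temporal rate $\tfrac12$ from the new Brownian increment while respecting the smoothing ceiling $(1+r-s)/2$. A secondary point is the mild circularity between \eqref{eq:spacereg} and \eqref{eq:tempreg} --- the drift's near-diagonal term is most cleanly tamed using temporal H\"older continuity --- which I would break by first establishing finite $\dot{\cH}^r$-moments via a standard Lipschitz fixed-point and Gronwall argument and then bootstrapping the two regularity estimates.
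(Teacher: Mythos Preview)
The paper does not prove this statement; it is quoted from \cite[Thm.~3.1 \& Thm.~4.1]{KruseLarsson} and used as a black-box input to the subsequent error analysis, so there is no in-paper proof to compare against. Your outline is a faithful reconstruction of the Kruse--Larsson argument itself: the three-term mild-solution decomposition, analytic-semigroup smoothing and H\"older bounds, Burkholder--Davis--Gundy for the stochastic convolution, and in particular the eigenvalue-wise cancellation $\int_0^t(-a_k)^{r+1}\txte^{2a_k(t-s)}\,\txtd s\le\tfrac12(-a_k)^r$ that secures the borderline spatial order $r+1$ for the stochastic term without an operator-norm bound. Your diagnosis of the deterministic convolution's endpoint singularity (and the freeze-and-subtract cure via temporal H\"older continuity), the split of the temporal exponent into the Brownian $\tfrac12$ versus the smoothing ceiling $(1+r-s)/2$, and the bootstrap needed to break the mild circularity between \eqref{eq:spacereg} and \eqref{eq:tempreg} are all correct; there is no gap in the sketch.
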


As for the deterministic counterpart ($g\equiv0$) of the SPDE~\eqref{eq:SPDE}, one may
use regularity estimates to obtain convergence rates and error estimates for associated 
numerical schemes~\cite{Thomee,ErnGuermond}. In this paper, as already discussed in 
Section~\ref{sec:intro}, we are interested in the spatial approximation error only. 
We stated the temporal regularity results only for 
completeness. It is noted that the constant $C_{\textnormal{spa}}>0$ in 
Theorem~\ref{thm:regularity} does depend upon $p,r,A,f,g$ and $T$.\medskip

As a typical example for the abstract framework, one should always keep in mind
classical one-component reaction-diffusion systems, where
\benn
\cH=L^2(\cD)\qquad \text{and} \qquad A=\Delta:=\sum_{k=1}^d\frac{\partial^2}{\partial x_k^2},
\eenn
with Dirichlet boundary conditions, but we shall not restrict to this case in this paper 
and continue to work with the more general setup.

\section{SPDE - Spatial Discretization}
\label{sec:discretize}

Let $\cS_h$ for $h\in(0,1]$ denote a continuous family of finite-dimensional subspaces 
of the Hilbert space $\dot{\cH}^1$ with
\be
\dim(\cS_h)=:N\in\N,
\ee
which are spanned by $N$ basis elements of an orthnormal basis of $\dot{\cH}^1$. The spaces 
$\cS_h$ are going to be used as the spatial discretization spaces; see~\cite{Thomee,ErnGuermond} 
for a detailed overview. Particularly important examples 
are the span of a finite set of basis functions of $A$ leading to a spectral Galerkin method, 
or the span of piecewise polynomial functions on $\cD$, where $h$ is the diameter of the 
largest element of a suitable mesh on $\cD$, leading to a Galerkin finite element method. 
Let $R_h:\dot{\cH}^1\ra \cS_h$ be the orthogonal projection onto $\cS_h$ with 
respect to the inner product $a(\cdot,\cdot):=\langle A^{1/2}\cdot,A^{1/2}\cdot\rangle_\cH$
so that we have
\benn
a(R_hu,v_h)=a(u,v_h),\quad \text{for all $u\in\dot{\cH}^1$, $v_h\in\cS_h$.}
\eenn
The discretized version $A_h$ of $A$ is defined by the requirement that for a given 
$u_h\in\cS_h$, the image $A_hu_h$ is the unique element satisfying
\benn
a(u_h,v_h)=\langle A_hu_h,v_h\rangle_\cH,\quad \text{for all $v_h\in\cS_h$.} 
\eenn
Furthermore, let $P_h:\dot{\cH}^{-1}\ra \cS_h$ be the orthogonal projection onto $\cS_h$
defined analogously to $R_h$, i.e., by requiring
\benn
\langle P_hu,v_h\rangle_\cH=a(A^{-1}u,v_h),\quad \text{for all $u\in\dot{\cH}^{-1}$, 
$v_h\in\cS_h$.}
\eenn
Then the spatial discretization of the SPDE can be written as 
\be
\label{eq:SPDEdiscrete}
\txtd u_h(t)=\left[A_hu_h(t)+P_hf(u_h(t))\right]~\txtd t + P_hg(u_h(t))~\txtd W(t),
\qquad u_h(0)=P_hu_0.
\ee
It is relatively straightforward to check that~\eqref{eq:SPDEdiscrete} must also
have a unique mild solution; see also the proof of Lemma~\ref{lem:sum}. To fix the 
role of the discretization parameter $h$, we are going to assume:
\begin{itemize}
 \item[\mylabel{a:project}{(A4)}] There exists a constant $C_{h}>0$ such that
 \be
 \label{eq:Rcond}
 \|R_h v-v\|_{\cH}\leq C_{h}~ h^s~\|v\|_{\dot{\cH}^s},\quad \text{for all 
 $v\in\dot{\cH}^s$, $s\in\{1,2\}$, $h\in(0,1]$.}
 \ee
\end{itemize}

Recently, the regularity result of Theorem~\ref{thm:regularity} has been transferred
to yield results on strong/pathwise approximation properties of the approximating 
stochastic evolution equation~\eqref{eq:SPDEdiscrete} for the SPDE~\eqref{eq:SPDE}. Consider
the norm
\be
\|\cdot\|_{L^p(\Omega;\cH)}:=\left(\E[\|\cdot\|_\cH^p]\right)^{1/p}
\ee
for $p\in[2,+\I)$ as above. Then one can prove the following error estimate:

\begin{thm}[{\cite[Theorem~1.1]{Kruse}}]
\label{thm:approxKruse}
Suppose \ref{a:operator}-\ref{a:project} hold, then there exists a constant 
$C_{\textnormal{Gal}}>0$ such that 
\be
\label{eq:approxKruse}
\|u(t)-u_h(t)\|_{L^p(\Omega;\cH)}\leq C_{\textnormal{Gal}}~h^{1+r},\qquad \forall t\in\cI.
\ee
\end{thm}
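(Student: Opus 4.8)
The plan is to work with the two mild formulations, equation~\eqref{eq:SPDEmild} for $u$ and the analogous representation of~\eqref{eq:SPDEdiscrete} for $u_h$, and to estimate their difference term by term. First I would introduce the error operator of the semigroup approximation,
\benn
F_h(t):=\txte^{tA}-\txte^{tA_h}P_h,
\eenn
and decompose the error into an \emph{approximation} part governed by $F_h$ and a \emph{propagation} part governed by the Lipschitz differences $f(u)-f(u_h)$ and $g(u)-g(u_h)$. Concretely, for $\phi\in\{f,g\}$ the identity
\benn
\txte^{(t-s)A}\phi(u(s))-\txte^{(t-s)A_h}P_h\phi(u_h(s))=F_h(t-s)\phi(u(s))+\txte^{(t-s)A_h}P_h\bigl(\phi(u(s))-\phi(u_h(s))\bigr)
\eenn
turns $u(t)-u_h(t)$ into the initial contribution $F_h(t)u_0$, two deterministic integrals against $F_h$, two stochastic integrals against $F_h$, and the corresponding propagation integrals carrying the uniformly bounded operator $\txte^{(t-s)A_h}P_h$.

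The key analytic input would be the classical deterministic smoothing-type error estimates for $F_h$, of the form
\benn
\|F_h(t)v\|_\cH\le C\,h^{\beta}\,t^{-(\beta-q)/2}\,\|v\|_{\dot{\cH}^q},\qquad 0\le q\le\beta\le 2,
\eenn
together with their time-integrated $L^2$ versions; these follow from the approximation property~\ref{a:project} combined with the analyticity of $\txte^{tA}$ granted by~\ref{a:operator}. Applied to the initial term and the deterministic integral, with $v$ measured in the $\dot{\cH}^{r+1}$ or $\dot{\cH}^{r-1}$ norm through the spatial regularity estimate~\eqref{eq:spacereg} and assumption~\ref{a:f}, these give contributions of order $h^{1+r}$, the integrability of the time singularity $t^{-(\beta-q)/2}$ being exactly what restricts the useful range to $r\in[0,1)$.

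For the two stochastic integrals I would first apply the $L^p$ Burkholder--Davis--Gundy inequality to reduce each to a deterministic bound on $\E\bigl(\int_0^t\|\cdot\|_{\cL_2^0}^2\,\txtd s\bigr)^{p/2}$. The $F_h$ contribution is then controlled by the time-integrated error estimate combined with the growth bound~\eqref{eq:Lipg2} in the $\cL_{2,r}^0$ norm, again yielding order $h^{1+r}$. The propagation terms, both deterministic and stochastic, are bounded by the Lipschitz conditions~\eqref{eq:Lipf} and~\eqref{eq:Lipg1}, producing integrals of $\|u(s)-u_h(s)\|_{L^p(\Omega;\cH)}$. Collecting all pieces and invoking a singular Gronwall inequality then absorbs the propagation terms and closes the estimate with the uniform bound $C_{\textnormal{Gal}}\,h^{1+r}$.

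The hard part will be recovering the \emph{optimal} exponent $1+r$ for the stochastic contribution rather than a suboptimal one: a naive It\^o-isometry bound typically loses half an order, and the full rate is obtained only through the sharp time-weighted form of the deterministic estimates together with careful exploitation of the $\cL_{2,r}^0$-regularity of $g$ in~\ref{a:g}. Matching the semigroup smoothing exponent, the approximation order from~\ref{a:project}, and the regularity index $r$ so that every time integral converges and the power of $h$ emerges as exactly $1+r$ is the delicate bookkeeping at the heart of the argument, and is precisely the technical advance of the cited results of Kruse and Kruse--Larsson.
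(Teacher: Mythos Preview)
The paper does not prove this theorem at all: it is stated with a citation to \cite[Thm.~1.1]{Kruse} and used as a black box in the subsequent estimates (Theorem~\ref{thm:goal1} and Lemma~\ref{lem:sum}). Your sketch is a faithful outline of the strategy actually employed in the cited reference of Kruse (and Kruse--Larsson), so in that sense it is correct, but there is nothing in the present paper to compare it against.
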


In the later development of the error estimates for the covariance operator, we shall
need another auxiliary result, which we prove here:

\begin{lem}
\label{lem:sum}
Suppose \ref{a:operator}-\ref{a:project} hold, then here exists a constant $C_+>0$ (independent 
of $h$) such that 
\benn
\sup_{t\in\cI} \|u(t)+u_h(t)\|_{L^2(\Omega;\cH)}\leq C_+.
\eenn
\end{lem}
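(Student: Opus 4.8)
The plan is to reduce the bound on the sum $u(t)+u_h(t)$ to two ingredients that are already available: a uniform-in-time moment bound for the exact mild solution $u(t)$, and the uniform-in-$h$ Galerkin error estimate of Theorem~\ref{thm:approxKruse}. The elementary but crucial observation is that $u_h$ need not be estimated directly: writing $u_h=u-(u-u_h)$, the discrete solution inherits every uniform bound enjoyed by $u$ up to the (uniformly small) approximation error. Concretely, two applications of the triangle inequality give
\be
\|u(t)+u_h(t)\|_{L^2(\Omega;\cH)}\leq 2\,\|u(t)\|_{L^2(\Omega;\cH)}+\|u(t)-u_h(t)\|_{L^2(\Omega;\cH)},
\ee
so it suffices to bound each term on the right uniformly in $t\in\cI$ and in $h\in(0,1]$.

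For the second term, I would first pass from the $L^2$- to the $L^p$-norm over the probability space: since $(\Omega,\cF,\P)$ is a probability space and $p\geq 2$, Jensen's (equivalently H\"older's) inequality yields $\|\cdot\|_{L^2(\Omega;\cH)}\leq\|\cdot\|_{L^p(\Omega;\cH)}$. Applying Theorem~\ref{thm:approxKruse} then gives
\be
\|u(t)-u_h(t)\|_{L^2(\Omega;\cH)}\leq\|u(t)-u_h(t)\|_{L^p(\Omega;\cH)}\leq C_{\textnormal{Gal}}\,h^{1+r}\leq C_{\textnormal{Gal}},
\ee
where the last step uses $h\in(0,1]$ and $1+r>0$. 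This bound is uniform in both $t$ and $h$, which is exactly what is needed for $C_+$ to be independent of $h$.

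For the first term, the key is a uniform-in-time second-moment bound for $u$ in $\cH$. I would extract this from the spatial regularity estimate~\eqref{eq:spacereg} of Theorem~\ref{thm:regularity}, which under \ref{a:operator}--\ref{a:g} guarantees $M:=\sup_{t\in\cI}(\E[\|u(t)\|_{\dot{\cH}^{r+1}}^p])^{1/p}<\I$. Combining the probability-space embedding $L^p\hookrightarrow L^2$ with the continuous embedding $\dot{\cH}^{r+1}\hookrightarrow\cH$ (valid because the spectrum of $A$ is bounded away from zero, so $\|v\|_\cH\leq C_{\textnormal{emb}}\|v\|_{\dot{\cH}^{r+1}}$) yields
\be
\sup_{t\in\cI}\|u(t)\|_{L^2(\Omega;\cH)}\leq\sup_{t\in\cI}\|u(t)\|_{L^p(\Omega;\cH)}\leq C_{\textnormal{emb}}\,M.
\ee
Assembling the three displays then gives the claim with $C_+=2C_{\textnormal{emb}}M+C_{\textnormal{Gal}}$.

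The only genuinely substantive input is the finiteness of $M$, i.e.\ reading~\eqref{eq:spacereg} as an a priori estimate whose right-hand side is finite (the mild solution has finite $p$-th moments in $\dot{\cH}^{r+1}$, uniformly on the compact interval $\cI$). I expect this to be the main obstacle, in the sense that it is the only step not reducible to a one-line inequality. Should one wish to avoid invoking Theorem~\ref{thm:regularity} as a black box, one would instead start from the mild formulation~\eqref{eq:SPDEmild}, estimate the deterministic convolution through the analytic-semigroup smoothing bound $\|(-A)^{(1-r)/2}\txte^{\tau A}\|\lesssim\tau^{-(1-r)/2}$ together with \ref{a:f}, estimate the stochastic convolution via It\^o's isometry together with \ref{a:g}, and then close a singular Gronwall (Henry-type) inequality for $t\mapsto\|u(t)\|_{L^2(\Omega;\cH)}$ on $\cI$, the integrability of the kernel being ensured by $(1-r)/2<1$. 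Everything else is triangle inequalities and norm monotonicity.
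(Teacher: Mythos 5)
Your proof is correct, but it handles the discrete solution $u_h$ by a genuinely different route than the paper. The paper splits $\|u+u_h\|\leq\|u\|+\|u_h\|$ and bounds $\|u_h\|_{L^2(\Omega;\cH)}$ \emph{directly}, by observing that the discretized system~\eqref{eq:SPDEdiscrete} again satisfies \ref{a:operator}--\ref{a:g} (e.g.\ $P_hf$ inherits the Lipschitz bound because $P_h$ is an orthogonal projection, with constants uniform in $h$), so the same existence/moment theory applies to $u_h$ verbatim. You instead write $u_h=u-(u-u_h)$, so that $\|u+u_h\|\leq 2\|u\|+\|u-u_h\|$, and dispose of the second term with the Galerkin estimate of Theorem~\ref{thm:approxKruse} together with $h^{1+r}\leq 1$. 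Your route buys you something real: you never have to re-verify the standing assumptions for the projected coefficients (the paper does this only sketchily, ``the other assumptions are checked similarly''), and uniformity in $h$ comes for free from the $h$-independence of $C_{\textnormal{Gal}}$. The price is that you invoke the comparatively heavy Theorem~\ref{thm:approxKruse}, whereas the paper's argument needs only well-posedness; since that theorem is a standing ingredient of the very next proof anyway, this costs nothing here. Two small remarks. First, for the term $\sup_{t\in\cI}\|u(t)\|_{L^2(\Omega;\cH)}$ you route through the $\dot{\cH}^{r+1}$ regularity of~\eqref{eq:spacereg} plus an embedding; this is more than needed, and as you yourself note~\eqref{eq:spacereg} is an a priori implication rather than a closed bound, so the cleaner citation is the classical $\sup_{t\in\cI}\E[\|u(t)\|_\cH^p]<\I$ from the existence theorem (which is exactly what the paper cites). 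Second, your fallback sketch via the mild formulation and a singular Gronwall argument is sound but unnecessary given that bound. With either fix the argument is complete.
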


\begin{proof}
Indeed, we just have
\benn
\sup_{t\in\cI}\|u(t)+u_h(t)\|_{L^2(\Omega;\cH)}\leq \sup_{t\in\cI}\|u(t)\|_{L^2(\Omega;\cH)}
+\sup_{t\in\cI} \|u_h(t)\|_{L^2(\Omega;\cH)}
\eenn
so the first term is bounded by a direct application of Theorem~\ref{thm:regularity} (or in
fact, the classical result~\cite[Theorem~7.4(ii)]{DaPratoZabczyk}). For the discretization, note
that we may apply the same results since (A0)-(A3) also hold for the discretized 
SPDE~\eqref{eq:SPDEdiscrete}. For example, consider (A1) then we have
\be
\|P_hf(u)-P_hf(v)\|_{\dot{H}^{r-1}}=\|P_h[f(u)-f(v)]\|_{\dot{H}^{r-1}}\leq 
\|f(u)-f(v)\|_{\dot{H}^{r-1}}
\ee
since $P_h$ is an orthogonal projector. The other assumptions are checked similarly.
\end{proof}

Of course, it should be noted that the constant $C_+>0$ will depend on the data of 
the problem, i.e., on $f,g,T,A$. However, we will only be interested in the convergence
rate in $h$ and we will show later on that we can select $T$ as an order one constant anyhow,
while $f,g,A$ are given and satisfy the Lipschitz and semigroup assumptions stated above.
For more background on discretization of SPDEs, we also refer 
to~\cite{JentzenKloeden,LordPowellShardlow}.

\section{SPDEs - Covariance}
\label{sec:SPDEcov}

The next step is to establish numerical error estimates for 
the covariance. Let $v\in L^2(\Omega;\cH)$, then one defines the covariance 
operator~\cite{DaPratoZabczyk,LangLarssonSchwab} of $v$ as
\be
\label{eq:covSPDE}
\Cov (v):=\E[(v-\E[v])\otimes (v-\E[v])].
\ee
By definition, $\Cov (v):\cH\ra \cH$ is a symmetric linear operator. In addition one may check
that $\Cov (v)$ is nuclear using the equivalent characterization of nuclear operators $M$ on
Hilbert spaces via the condition
\benn
\Tr(M):=\sum_{k=1}^\I \langle M\xi_k , \xi_k\rangle_\cH <+\I
\eenn
for an orthonormal basis $\{\xi_k\}_{k=1}^\I$ of $\cH$. The space of nuclear operators 
$\cL_1(\cH,\cH)$ becomes a Banach space under the norm $\|\cdot\|_{\cL_1(\cH,\cH)}:=
\Tr(\cdot)$~\cite[Appendix~C]{DaPratoZabczyk}. Note that we have two well-defined covariance 
operators
\be
\Cov(u(t))\qquad \text{and}\qquad \Cov(u_h(t))
\ee
as the SPDE~\eqref{eq:SPDE} and the spatially discretized version~\eqref{eq:SPDEdiscrete}
both have mild solutions in $L^2(\Omega\times \cI;\cH)$. 

\begin{thm}
\label{thm:goal1}
There exists a constant $C_\txtd>0$ such that
\be
\sup_{t\in\cI}\|\Cov(u(t))-\Cov(u_h(t))\|_{\cL_2(\cH,\cH)}\leq C_\txtd h^{1+r}.
\ee
\end{thm}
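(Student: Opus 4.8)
The plan is to reduce the difference of covariance operators to two first-order quantities: the difference $u-u_h$, which is small of order $h^{1+r}$ by Theorem~\ref{thm:approxKruse}, and the sum $u+u_h$, which is merely bounded by Lemma~\ref{lem:sum}. Writing $\bar v:=v-\E[v]$ for the centered variable, so that $\Cov(v)=\E[\bar v\otimes\bar v]$, the starting point is the polarization-type identity
\[
\bar u\otimes \bar u-\bar u_h\otimes \bar u_h
=\tfrac12\left[(\bar u+\bar u_h)\otimes(\bar u-\bar u_h)
+(\bar u-\bar u_h)\otimes(\bar u+\bar u_h)\right].
\]
This identity is exactly the algebraic reason why the sum and the difference are the only two ingredients needed, and it explains the role of the auxiliary Lemma~\ref{lem:sum}: the sum $u+u_h$ appears as a genuinely necessary factor, not a redundant one.

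Taking expectations, using the Bochner-integral triangle inequality $\|\E[\cdot]\|_{\cL_2}\leq\E[\|\cdot\|_{\cL_2}]$ in the Hilbert--Schmidt space $\cL_2(\cH,\cH)$, and the rank-one identity $\|a\otimes b\|_{\cL_2(\cH,\cH)}=\|a\|_\cH\|b\|_\cH$ (the two symmetric terms contribute equally, cancelling the factor $\tfrac12$), I would obtain
\[
\|\Cov(u(t))-\Cov(u_h(t))\|_{\cL_2(\cH,\cH)}
\leq \E\left[\|\bar u+\bar u_h\|_\cH\,\|\bar u-\bar u_h\|_\cH\right].
\]
An application of the Cauchy--Schwarz inequality over $\Omega$ then factors the right-hand side as $\|\bar u+\bar u_h\|_{L^2(\Omega;\cH)}\,\|\bar u-\bar u_h\|_{L^2(\Omega;\cH)}$. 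Since centering only decreases the $L^2(\Omega;\cH)$-norm, because $\|X-\E X\|_{L^2(\Omega;\cH)}^2=\|X\|_{L^2(\Omega;\cH)}^2-\|\E X\|_\cH^2$, the first factor is bounded by $\|u+u_h\|_{L^2(\Omega;\cH)}\leq C_+$ via Lemma~\ref{lem:sum}, and the second by $\|u-u_h\|_{L^2(\Omega;\cH)}\leq C_{\textnormal{Gal}}h^{1+r}$ via Theorem~\ref{thm:approxKruse} with $p=2$. Taking the supremum over $t\in\cI$ and setting $C_\txtd:=C_+C_{\textnormal{Gal}}$ concludes.

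Conceptually there is no deep difficulty once the symmetric identity is in place; the care points are mostly bookkeeping. First, one must justify that $\bar u,\bar u_h\in L^2(\Omega;\cH)$ so that the expectations and the interchange with the Hilbert--Schmidt norm are legitimate, which is guaranteed by the regularity estimates~\eqref{eq:spacereg}--\eqref{eq:tempreg}. Second, the whole comparison must be carried out in the single space $\cL_2(\cH,\cH)$, viewing the finite-rank operator $\Cov(u_h(t))$ as an operator on all of $\cH$ rather than only on $\cS_h$. The main thing I expect to need genuine attention is ensuring that every constant ($C_+$ and $C_{\textnormal{Gal}}$) is independent of $h$ and that both bounds hold uniformly in $t\in\cI$; this is precisely what the cited results deliver, so that the claimed rate $h^{1+r}$ survives intact and no hidden $h$-dependence is introduced by the centering step.
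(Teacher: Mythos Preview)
Your proposal is correct and follows essentially the same route as the paper: factor the difference of covariances into a product of the sum $u+u_h$ and the difference $u-u_h$, then invoke Lemma~\ref{lem:sum} and Theorem~\ref{thm:approxKruse} after a Cauchy--Schwarz step. Your use of the symmetric polarization identity and the observation that centering only decreases the $L^2(\Omega;\cH)$-norm make the argument slightly more streamlined than the paper's, which first treats the zero-mean case via a basis expansion and then handles nonzero means separately.
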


\begin{proof}
The proof is a calculation aiming to use the spatial approximation property of 
Theorem~\ref{thm:approxKruse}. Suppose as a first case that 
$\E[u]=0=\E[u_h]$. Consider an orthonormal basis $\{\xi_k\}_{k=1}^\I$ of $\cH$.
We want to estimate the error in the Hilbert-Schmidt norm $\|\cdot\|_{\cL_2}$.
For the following steps we suppress the argument $u=u(t)$ and $u_h=u_h(t)$
and use the definition of the covariance operator
\beann
\sup_{t\in\cI}\|\Cov(u)-\Cov(u_h)\|_{\cL_2}^2&=&\sup_{t\in\cI} \|\E[u\otimes u]-\E[u_h\otimes u_h]
\|_{\cL_2}^2,\\ 
&\leq &\sup_{t\in\cI} \|\E[u\otimes u]-\E[u_h\otimes u_h]\|_{\cL_1}^2,
\eeann
where we used that the trace-class norm bounds the Hilbert-Schmidt norm. Hence,
we have using that off-diagonal terms vanish in the trace-class norm that
\beann
\sup_{t\in\cI}\|\Cov(u)-\Cov(u_h)\|_{\cL_2}^2
&\leq &\sup_{t\in\cI} \sum_{k=1}^\I \langle \E[(u-u_h)\otimes (u+u_h)] \xi_k,\xi_k\rangle^2_\cH,\\
&=&\sup_{t\in\cI} \sum_{k=1}^\I \langle \E[((u-u_h)\otimes (u+u_h) )\xi_k],\xi_k\rangle^2_\cH,\\
&=&\sup_{t\in\cI} \sum_{k=1}^\I \E[ \langle u-u_h,\xi_k\rangle_\cH \langle u+u_h,\xi_k\rangle_\cH]^2,\\
\eeann
where we use in the last step the definition of the tensor product 
$(v_1\otimes v_2)v_3:=\langle v_2,v_3\rangle_\cH v_1$ for $v_k\in\cH$ for $k\in\{1,2,3\}$. 
Then a direct application of Cauchy-Schwarz yields
\bea
\sup_{t\in\cI}\|\Cov(u)-\Cov(u_h)\|_{\cL_2}^2&\leq &\sup_{t\in\cI} \sum_{k=1}^\I 
\E[ \langle u-u_h,\xi_k\rangle_\cH^2] \E[\langle u+u_h,\xi_k\rangle_\cH^2],\nonumber\\
&\leq &\sup_{t\in\cI} \|u-u_h\|^2_{L^2(\Omega;\cH)} ~\| u+u_h\|_{L^2(\Omega;\cH)}^2.
\label{eq:lastline}
\eea
In the expression~\eqref{eq:lastline}, we may estimate the two terms separately by estimating 
the supremum by the product of suprema. Furthermore, a direct application of 
Theorem~\ref{thm:approxKruse} to the first term and Lemma~\ref{lem:sum} to the second term give 
\benn
\sup_{t\in\cI}\|\Cov(u)-\Cov(u_h)\|_{\cL_2}^2\leq C_{\textnormal{Gal}}~ C_{+}~ h^{2(r+1)},
\eenn
which yields the result in the basic case of zero means. If $\E[u]\neq0$ and $\E[u_h]\neq 0$,
we observe that 
\beann
\sup_{t\in\cI} \|u-u_h-\E[u-u_h]\|_{L^2(\Omega;\cH)}^2&\leq& 
\sup_{t\in\cI} \|u-u_h\|_{L^2(\Omega;\cH)}^2+ \sup_{t\in\cI}\|\E[u-u_h]\|_{L^2(\Omega;\cH)}^2,\\
&\leq& C_{\textnormal{Gal}}~ h^{2(r+1)} + \sup_{t\in\cI}\E[\|u-u_h\|_{L^2(\Omega;\cH)}]^2,\\
&\leq& 2 C_{\textnormal{Gal}}~ h^{2(r+1)},
\eeann
using again Theorem~\ref{thm:approxKruse} twice. Furthermore, it is easy to see that 
\benn
\E[\| u+u_h-\E[u+u_h]\|_{L^2(\Omega;\cH)}]
\eenn
remains bounded for nonzero means. The result now follows repeating the same steps shown
for the zero means case also for the nonzero means case.
\end{proof}

Obviously the constant $C_\txtd$ also depends upon the data of the problem as do $C_+$ and 
$C_{\textnormal{Gal}}$ but all constants are independent of $h$, which is the key discretization
parameter in the step (S1).

\section{SODEs - Linearization}
\label{sec:SODE}

Having estimated the error of the spatial discretization, we are now dealing with 
\be
\label{eq:SODE}
\txtd u_h(t)=\left[A_hu_h(t)+P_hf(u_h(t))\right]~\txtd t + P_h g(u_h(t))~\txtd W(t),
\qquad u_h(0)=P_hu_0,
\ee
where $u_h\in\R^{N}$ is a finite-dimensional approximation of $u$. We assume that the 
original SPDE~\eqref{eq:SPDE} and its projection 
have a locally asymptotically stable homogeneous steady state for zero noise and that we 
only study the small noise regime.

\begin{itemize}
 \item[\mylabel{a:spectrum}{(A5)}] Assume $u^*$ satisfies $Au^*+f(u^*)=0$. Furthermore, 
suppose $f$ is Fr\'echet differentiable, $\txtD_uf(u^*)=f'(u^*)\textnormal{Id}$ and 
\be
\textnormal{spec}(A+f'(u^*)\textnormal{Id})\subset \{\rho\in\R :\rho<0\}.
\ee
Furthermore, assume $u_h^*:=P_hu^*$ satisfies $A_hu_h^*+P_hf(u_h^*)=0$ for all $h\in(0,1]$.
 \item[\mylabel{a:smallnoise}{(A6)}] Suppose there exists a constant $\psi>0$ such that 
\be
\|P_hg(u_h(t))\|_2\leq \psi
\ee
for all $h\in(0,1]$.
\end{itemize} 

Below we are also going to assume that $\psi$ is chosen sufficiently small to get a 
good approximation of the linearized system. The goal is to provide a finite-time estimate 
for the difference between the covariance matrix $\Cov(u_h(t))$ of~\eqref{eq:SODE} and 
covariance matrix $\Cov(U_h(t))$ of the linearized OU process
\be
\label{eq:SODE1}
\txtd \tilde{U}_h(t)=\left[A_h+P_h[\txtD_uf](u^*)\right]\tilde{U}_h(t)~\txtd t + P_hg(u^*)~\txtd W(t),
\qquad \tilde{U}_h(0)=P_hu_0,
\ee
where $\txtD_uf$ denotes the usual Fr{\'e}chet derivative as introduced already 
above. Note that the linear operator $P_hg(u^*)$ only acts nontrivially on the first 
$N$ basis elements $\{e_i\}_{i=1}^N$ and is zero on $\{e_i\}_{i=N+1}^\I$. Hence, we 
can replace $W(t)$ by 
\benn
W^N(t)=\sum_{i=1}^N \sqrt{\lambda_{Q,i}}\beta_i(t)e_i
\eenn
where we assume that $\{e_i\}_{i=1}^\I$ are also eigenfunctions of $Q$, 
$\{\lambda_{Q,i}\}_{i=1}^\I$ are eigenvalues of $Q$, and $\{\beta_i(t)\}_{i=1}^\I$ are 
independent Brownian motions. Recall that without additional assumptions on the 
nonlinearity of $f$, it is usually not possible to estimate the error between a linearized 
and a nonlinear system, even on a finite time scale. To simplify the notation 
we let
\benn
u_h =: z,\qquad A_hu_h(t)+P_hf(u_h(t))=:F(z), \qquad P_hg(u_h(t))=:G(z),
\eenn
as well as
\benn
\tilde{U}_h =: \tilde{Z},\qquad \left[A_h+P_h[\txtD_uf](u^*)\right]=:\cA, \qquad P_hg(u^*)=:\tilde{B},
\eenn
where $G(z)$ and $\tilde{B}$ are operators projected/restricted onto the 
first $N$ basis functions. Hence, we have to compare the SODE
\be
\label{eq:brSODE}
\txtd z = F(z)~\txtd t+G(z)~\txtd W^N,\qquad z(0)=z_0,
\ee
near a steady state $z^*:=u_h^*$ to the SODE
\be
\label{eq:brSODElin}
\txtd \tilde{Z} = \cA \tilde{Z}~\txtd t+\tilde{B}~\txtd W^N,\qquad \tilde{Z}(0)=\tilde{Z}_0.
\ee
Without loss of generality we may assume that $z^*\equiv (0,\ldots,0)^\top$
since we can always translate the steady state if necessary. Let 
${\bf p}=(p_1,p_2,\ldots,p_N)\in(\N_0)^N$ be a multi-index, define the mean values
of $z$ by $\mu:=\E[z]\in\R^N$ and the centered moments as
\be
\E[(z-\mu)^{\bf p}]:=\E[(z_1-\mu_1)^{p_1}(z_2-\mu_2)^{p_2}\cdots (z_N-\mu_N)^{p_N}].
\ee
To make the notation more compact, we also introduce 'altered' multi-indices as follows:
\benn
{\bf p}(k:\zeta)=(p_1,p_2,\ldots,p_{k-1},p_{k}+\zeta,p_{k+1},\ldots,p_N)
\eenn
for $\zeta\in\Z$ and multiple arguments pertain to changes in the respective components.
Furthermore, we consider the diffusion operators
\be
\cG(z):=G(z)G(z)^\top,\qquad \tilde{\cB}:=\tilde{B}\tilde{B}^\top.
\ee

\begin{lem}
\label{lem:Socha}
The evolution equations for the centered moments $\E[(z-\mu)^{\bf p}]$ of~\eqref{eq:brSODE} 
are given by
\beann
\label{eq:cm}
\frac{\txtd}{\txtd t} \E[(z-\mu)^{\bf p}] &=& \sum_{k=1}^N p_k \E[F_k(z)(z-\mu)^{{\bf p}(k:-1)}]
+\frac12 \sum_{k=1}^Np_k(p_k-1)\E[\cG_{kk}(z)(z-\mu)^{{\bf p}(k:-2)}]\\
&& + \sum_{l=2}^N\sum_{k=1}^{l-1} p_lp_k \E[\cG_{kl}(z)(z-\mu)^{{\bf p}(k:-1,l:-1)}]
\eeann
\end{lem}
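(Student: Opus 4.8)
The plan is to apply the multidimensional It\^o formula to a suitably chosen test function and then take expectations, which is the classical route to moment hierarchies~\cite{Socha}. I would introduce the time-dependent monomial $\phi(z,t):=(z-\mu(t))^{\bf p}=\prod_{j=1}^N(z_j-\mu_j(t))^{p_j}$, regarded as a function of the state $z$ carrying an explicit $t$-dependence through the running mean $\mu(t)=\E[z(t)]$. Since~\eqref{eq:brSODE} has quadratic covariation $\txtd\langle z_k,z_l\rangle=\cG_{kl}(z)\,\txtd t$ with $\cG=GG^\top$, It\^o's formula yields
\be
\txtd\phi=\Big(\partial_t\phi+\sum_{k}F_k\,\partial_{z_k}\phi+\frac12\sum_{k,l}\cG_{kl}\,\partial_{z_k}\partial_{z_l}\phi\Big)\,\txtd t+\textnormal{(martingale)}.
\ee
The first derivatives of the monomial are exactly where the altered multi-index notation enters: $\partial_{z_k}\phi=p_k(z-\mu)^{{\bf p}(k:-1)}$, $\partial_{z_k}^2\phi=p_k(p_k-1)(z-\mu)^{{\bf p}(k:-2)}$, and $\partial_{z_k}\partial_{z_l}\phi=p_kp_l(z-\mu)^{{\bf p}(k:-1,l:-1)}$ for $k\neq l$.

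Next I would take expectations of the integral form. The stochastic integral is a genuine martingale of zero mean because the moment bounds guaranteed by well-posedness of~\eqref{eq:brSODE} (and ultimately Theorem~\ref{thm:regularity}) make its integrand square-integrable, so only the drift bracket survives. The term $\sum_k F_k\,\partial_{z_k}\phi$ reproduces the first sum of the claim. For the diffusion term I would split the double sum into its diagonal $k=l$, producing $\frac12\sum_k p_k(p_k-1)\,\E[\cG_{kk}(z-\mu)^{{\bf p}(k:-2)}]$, i.e. the second sum, and its off-diagonal part; using the symmetry $\cG_{kl}=\cG_{lk}$ together with the symmetry of the mixed derivative, each unordered pair $\{k,l\}$ is counted twice, the factor $\frac12$ cancels, and one is left with $\sum_{l=2}^N\sum_{k=1}^{l-1}p_kp_l\,\E[\cG_{kl}(z-\mu)^{{\bf p}(k:-1,l:-1)}]$, the third sum.

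The step I expect to be the genuine obstacle is the explicit time derivative $\partial_t\phi$, which is the price of centering about the \emph{running} mean rather than a fixed point. Differentiating through $\mu(t)$ gives $\partial_t\phi=-\sum_k p_k\dot\mu_k\,(z-\mu)^{{\bf p}(k:-1)}$, and the mean equation for~\eqref{eq:brSODE} reads $\dot\mu_k=\E[F_k(z)]$, so $\E[\partial_t\phi]=-\sum_k p_k\,\E[F_k]\,\E[(z-\mu)^{{\bf p}(k:-1)}]$. Combined with the drift contribution, the first sum is therefore most naturally the covariance $\sum_k p_k\big(\E[F_k(z-\mu)^{{\bf p}(k:-1)}]-\E[F_k]\,\E[(z-\mu)^{{\bf p}(k:-1)}]\big)$, and the clean form stated in the lemma is recovered precisely when the centering point is held fixed---for instance at the stable steady state $z^*=0$ in the stationary/small-noise regime of interest, where $\dot\mu$ vanishes---or if the mean-drift correction is carried separately. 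Making this bookkeeping explicit, and verifying the integrability needed to discard the martingale at every order $|{\bf p}|$, is where the care is required; the remaining combinatorics is routine.
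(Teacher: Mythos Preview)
Your approach is exactly the paper's: apply It\^o's formula to the monomial and take expectations, with the stochastic integral vanishing by the martingale property (the paper justifies this via the Lipschitz assumption on $G$, you via the moment bounds from well-posedness). The paper's proof is a two-sentence sketch citing~\cite{Socha}, so your write-up is considerably more explicit.

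You have, however, put your finger on a genuine point the paper glosses over. The paper applies It\^o to ``monomials of the form $z^{\bf p}$'' and does not address the extra $\partial_t\phi$ contribution coming from the time-dependence of $\mu(t)$, which as you correctly compute produces $-\sum_k p_k\,\E[F_k]\,\E[(z-\mu)^{{\bf p}(k:-1)}]$. For general $|{\bf p}|$ this correction is nonzero and the formula in the lemma is, strictly speaking, incomplete. It is worth observing that in the only case the paper actually uses, namely $|{\bf p}|=2$, the factor $\E[(z-\mu)^{{\bf p}(k:-1)}]$ is a first centered moment and hence vanishes identically, so the discrepancy does not propagate to the covariance equations in Lemmas~\ref{lem:cii}--\ref{lem:cij}. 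Your instinct to flag this bookkeeping is sound.
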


\begin{proof}
The calculation of the SODEs for the moments follows from first applying It\^o's formula
to monomials of the form $z^{\bf p}$ as shown in~\cite[Section~4.1]{Socha}. However, we 
then need to average via $\E[\cdot]$, and terms of the form $\int (\cdot)~\txtd W^N$ average
to zero as they satisfy the martingale property~\cite{Kallenberg} by the Lipschitz 
assumption on $G$. 
\end{proof}

Consider the linear approximation of~\eqref{eq:brSODE} given by~\eqref{eq:brSODElin}. We 
use the notation $\nu:=\E[\tilde{Z}]$ and we may assume without loss of generality that $\cA$ is 
already diagonal; indeed, by assumptions~\ref{a:operator} and \ref{a:spectrum} we 
already have that $\cA$ is symmetric with real spectrum so we can apply a coordinate 
change to make $\cA$ diagonal, which will just change constants in the estimates so we do 
not display this explicitly here. Now we have two processes and we would like 
to compare $\Cov(z)$ with $\Cov(\tilde{Z})$. From Lemma~\ref{lem:Socha} it follows
that 
\be
\frac{\txtd }{\txtd t}\mu = \E[F(z)],\qquad \frac{\txtd }{\txtd t}\nu = \cA\nu.
\ee
It is relatively easy to write down the formal evolution equations for the covariances. 
For the diagonal entries we have
\bea
\frac{\txtd }{\txtd t} \Cov(z)_{ii} &=& 2 \E[F_i(z)(z_i-\mu_i)] + \E[\cG_{ii}(z)] \\
\frac{\txtd }{\txtd t} \Cov(Z)_{ii} &=& 2 \E[(\cA \tilde{Z})_i(\tilde{Z}_i-\nu_i)] + \cB_{ii}
\eea
Consider the difference $\Cov(z)-\Cov(\tilde{Z})=:\Cov^\Delta$ and also define the remainders
\benn
R^F(z):=F(z)-\cA z,\qquad R^G(z):=G(z)-\tilde{\cB}.
\eenn
Observe that the remainder $R^F$ is Lipschitz 
\be
\|R^F(z_1)-R^F(z_2)\|\leq (C_F+\|\cA\|)\|z_1-z_2\|\label{eq:PLip1},
\ee
by assumptions~\ref{a:f} and \ref{a:spectrum} for some constant $C_F>0$, where $\|\cdot\|$ always
denotes the $2$-norm. Regarding the remainder $R^G$, recall that we assumed a uniform noise
bound in~\ref{a:smallnoise} so that 
\be
\|R^G(z_1)-R^G(z_2)\|\leq C_G\psi^2\label{eq:PLip2}
\ee
for some constant $C_G>0$. Then one finds, using that $\cA$ is diagonal and via 
assumption~\ref{a:spectrum}, that
\beann
\frac{\txtd }{\txtd t} \Cov^\Delta_{ii} &=& 2 \E[F_i(z)(z_i-\mu_i)-(\cA \tilde{Z})_i(\tilde{Z}_i-\nu_i)] 
+ \E[G_{ii}(z)-\cB_{ii}],\\
&=&2\cA_{ii} \E[(z_i+\mu_i-\mu_i-R^F_i(z)/\cA_{ii})(z_i-\mu_i)-(\tilde{Z}_i+\nu_i-\nu_i)(\tilde{Z}_i-\nu_i)] 
+ \E[R_{ii}^G(z)],\\
&=&2\cA_{ii}\left( \Cov^\Delta_{ii} + \E[(\mu_i-R^F_i(z)/\cA_{ii})(z_i-\mu_i)
-\nu_i(\tilde{Z}_i-\nu_i)] \right)+ \E[R_{ii}^G(z)],\\
&=&2\cA_{ii}\left( \Cov^\Delta_{ii} + \E[R^F_i(z)/\cA_{ii}(\mu_i-z_i)] \right)
+ \E[R_{ii}^G(z)].
\eeann
Using the Lipschitz conditions~\eqref{eq:PLip1} and the bound~\eqref{eq:PLip2} one has 
\bea
|\E[R^F_i(z)/\cA_{ii}(\mu_i-z_i)+R_{ii}^G(z)]|&\leq& \frac{(\|\cA\|+C_F)|\mu_i|}{|\cA_{ii}|}
\E[\|z\|] \nonumber\\
&&+\frac{(\|\cA\|+C_F)}{|\cA_{ii}|}\E[\|z\|^2]+C_G\psi^2. \label{eq:expected}
\eea
We denote the right-hand side of the last inequality by $\eta_{ii}(t)$, which implies 
the final estimate
\be
\label{eq:tbG}
\frac{\txtd }{\txtd t} \Cov^\Delta_{ii}(t)\leq \eta_{ii}^*+2\cA_{ii} 
\Cov^\Delta_{ii}(t),\qquad \eta^*_{ii}:=\max_{t\in \cI}\eta_{ii}(t).
\ee
Considering the coordinate shift $\Cov^\Delta_{ii}(t)=C^\Delta_{ii}(t)-
\eta^*_{ii}/{2\cA_{ii}}$ yields
\be
\label{eq:tbG1}
\frac{\txtd }{\txtd t} C^\Delta_{ii}(t)\leq 2\cA_{ii} 
C^\Delta_{ii}(t).
\ee
Applying Gronwall's inequality to~\eqref{eq:tbG1}, transforming back into 
original coordinate frame, and using $\eta^*_{ii}/(2\cA_{ii})<0$ yields the following result:

\begin{lem}
\label{lem:cii}
Suppose the assumptions \ref{a:operator}-\ref{a:smallnoise} hold for all $t\in\cI$, 
then
\be
\label{eq:estimateii}
\Cov^\Delta_{ii}(t)\leq -\frac{\eta^*_{ii}}{2\cA_{ii}}+
\left[\Cov^\Delta_{ii}(0)+\frac{\eta^*_{ii}}{2\cA_{ii}}\right]\txte^{2\cA_{ii}t}. 
\ee
holds for all $t\in\cI$.
\end{lem}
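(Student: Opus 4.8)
The statement is the closed form of the scalar integral inequality already assembled in~\eqref{eq:tbG}, so the plan is simply to read that inequality as a Gronwall estimate and solve it. Writing $\phi(t):=\Cov^\Delta_{ii}(t)$, $\alpha(t):=\Cov^\Delta_{ii}(0)+\eta^*(t)$ and $\beta:=2\cA_{ii}$, inequality~\eqref{eq:tbG} becomes $\phi(t)\leq\alpha(t)+\beta\int_0^t\phi(s)\,\txtd s$, and the target~\eqref{eq:estimateii} is precisely $\phi(t)\leq\alpha(t)\,\txte^{\beta t}$, i.e.\ the standard output of the integral form of Gronwall's lemma for a non-decreasing forcing term.

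Before invoking Gronwall I would record the two structural facts the argument needs. The forcing $\alpha$ is non-decreasing, since $\Cov^\Delta_{ii}(0)$ is constant and $\eta^*(t)=\int_0^t\eta_{ii}(s)\,\txtd s$ is non-decreasing (as noted directly below~\eqref{eq:tbG}); moreover $\eta_{ii}\geq0$, being the non-negative right-hand side of~\eqref{eq:expected}, so $\eta^*\geq0$. The clean route is then to set $\psi(t):=\int_0^t\phi(s)\,\txtd s$, so that $\psi'(t)=\phi(t)\leq\alpha(t)+\beta\psi(t)$, multiply by the integrating factor $\txte^{-\beta t}$ to get $(\txte^{-\beta t}\psi)'\leq\alpha(t)\txte^{-\beta t}$, integrate from $0$ to $t$ using $\psi(0)=0$, and substitute the resulting bound on $\psi$ back into $\phi(t)\leq\alpha(t)+\beta\psi(t)$. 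Bounding $\alpha(s)\leq\alpha(t)$ inside the integral then collapses the estimate to $\phi(t)\leq\alpha(t)\txte^{\beta t}$, which is~\eqref{eq:estimateii}.

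The step I expect to be delicate, and which carries the whole proof, is the sign of $\beta=2\cA_{ii}$. By~\ref{a:spectrum} the diagonalised operator $\cA$ has strictly negative diagonal, so $\beta<0$ and $\txte^{\beta t}$ is a genuine decay factor; but the textbook integral-Gronwall lemma that yields the clean factor $\txte^{\beta t}$ is usually phrased for a non-negative rate. When the bound on $\psi$ is fed back through the term $\beta\psi$, the sign of $\beta$ reverses the direction of the inequality, so the monotonicity of $\alpha$ has to be used at exactly the right place. I would therefore cross-check the final factor against the variation-of-constants bound $\phi(t)\leq\Cov^\Delta_{ii}(0)\txte^{\beta t}+\int_0^t\txte^{\beta(t-s)}\eta_{ii}(s)\,\txtd s$ coming from the differential form $\frac{\txtd}{\txtd t}\Cov^\Delta_{ii}\leq\beta\Cov^\Delta_{ii}+\eta_{ii}$; reconciling these two routes and pinning down the exponential prefactor is the one place where the sign bookkeeping must be done honestly, the remainder being a routine application of Gronwall.
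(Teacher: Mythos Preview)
Your approach is exactly the paper's: the text leading into the lemma says only ``Applying Gronwall's inequality to~\eqref{eq:tbG} yields the following result,'' and nothing more, so at the level of strategy you and the paper agree completely.

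You are right, however, to flag the sign of $\beta=2\cA_{ii}$ as the delicate point, and in fact this is where the argument---both yours and the paper's---does not close. The integral Gronwall bound $\phi(t)\leq\alpha(t)\,\txte^{\beta t}$ for non-decreasing $\alpha$ requires $\beta\geq0$; with $\beta<0$ (which is the case here by~\ref{a:spectrum}) it can fail outright. Your proposed rescue, to ``use the monotonicity of $\alpha$ at exactly the right place,'' cannot work: take $\beta=-1$, $\eta_{ii}\equiv1$, $\phi(0)=0$, so that $\phi'=-\phi+1$ holds with equality and $\phi(t)=1-\txte^{-t}$. Then $\alpha(t)=t$ is non-decreasing, yet the claimed bound would read $\phi(t)\leq t\,\txte^{-t}$, which is already violated at $t=1$ since $1-\txte^{-1}\approx0.632>\txte^{-1}\approx0.368$. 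The variation-of-constants route you mention, $\phi(t)\leq\phi(0)\,\txte^{\beta t}+\int_0^t\txte^{\beta(t-s)}\eta_{ii}(s)\,\txtd s$, is the genuinely correct inequality, but for $\beta<0$ one has $\txte^{\beta(t-s)}\geq\txte^{\beta t}$ on $[0,t]$, so the integral term is bounded \emph{below}, not above, by $\txte^{\beta t}\eta^*(t)$. The two routes therefore do not reconcile to~\eqref{eq:estimateii}; the exponential prefactor as stated is too strong, and this is a gap shared by the paper's one-line proof.
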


Note that an estimate of the form~\eqref{eq:estimateii} is fully expected to hold since
it states that the growth of the difference between the covariances in the case of 
Lipschitz $F$ and sufficiently bounded noise is controlled by the first- and second-moments
of the nonlinear process. In particular, if we are close to a linear SODE or the spectral
gap is very large, then we have an excellent finite-time approximation on $\cI$, while large
noise and a strong nonlinearity make the approximation worse. The next step is to look at the 
off-diagonal terms and consider the case $i>j$ (the case $i<j$ is similar). The evolution 
equations are
\beann
\frac{\txtd }{\txtd t} \Cov(z)_{ij} &=& \E[F_i(z)(z_j-\mu_j)]+\E[F_j(z)(z_i-\mu_i)] 
+ \E[G_{ij}(z)] \\
\frac{\txtd }{\txtd t} \Cov(\tilde{Z})_{ij} &=& \E[(\cA \tilde{Z})_i(\tilde{Z}_j-\nu_j)]
+\E[(\cA \tilde{Z})_j(\tilde{Z}_i-\nu_i)] 
+ \cB_{ij} 
\eeann
A similar calculation as above leads one to define 
\beann
\eta_{ij}(t)&=&\left(\frac{(\|\cA\|+C_F)|\mu_i|}{|\cA_{ii}|}+\frac{(\|\cA\|+C_F)|\mu_j|}{|\cA_{jj}|}
C_G\psi^2\right)\E[\|z\|]\\
&&+\left[\frac{(\|\cA\|+C_F)}{|\cA_{ii}|}+\frac{(\|\cA\|+C_F)}{|\cA_{jj}|}\right]\E[\|z\|^2].
\eeann
As before, we use the notation $\eta_{ij}^*:=\max_{t\in\cI}\eta_{ij}(t)$ and use 
Gronwall's inequality to obtain the result:

\begin{lem}
\label{lem:cij}
Suppose the assumptions \ref{a:operator}-\ref{a:smallnoise} hold for all $t\in\cI$, then
\be
\label{eq:estimateij}
\Cov^\Delta_{ij}(t)\leq -\frac{\eta^*_{ij}}{\cA_{ii}+\cA_{jj}}+
\left[\Cov^\Delta_{ij}(0)+\frac{\eta^*_{ij}}{\cA_{ii}+\cA_{jj}}\right]\txte^{(\cA_{ii}+\cA_{jj})t}.  
\ee
holds for all $t\in\cI$.
\end{lem}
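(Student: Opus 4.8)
The plan is to derive~\eqref{eq:estimateij} in direct analogy with the diagonal case of Lemma~\ref{lem:cii}, exploiting that the two evolution equations for $\Cov(z)_{ij}$ and $\Cov(Z)_{ij}$ displayed above are structurally identical to their diagonal counterparts once the single index is split into the pair $i,j$. First I would subtract the two equations to obtain an evolution equation for $\Cov^\Delta_{ij}$, and rewrite the drift contributions using that $\cA$ is diagonal (by assumptions~\ref{a:operator} and~\ref{a:spectrum}) together with the decomposition $F_k(z)=\cA_{kk}z_k+R^F_k(z)$ for $k\in\{i,j\}$ and the exact identity $(\cA Z)_k=\cA_{kk}Z_k$ for the linear process~\eqref{eq:brSODElin}.

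The key algebraic step is to extract the covariance-difference term with the correct coefficient. Writing $\E[F_i(z)(z_j-\mu_j)]=\cA_{ii}\E[z_i(z_j-\mu_j)]+\E[R^F_i(z)(z_j-\mu_j)]$ and using $\E[z_j-\mu_j]=0$ to discard the $\cA_{ii}\mu_i$ contribution, the linear part of this term reduces to $\cA_{ii}\Cov(z)_{ij}$; by symmetry the companion term $\E[F_j(z)(z_i-\mu_i)]$ reduces to $\cA_{jj}\Cov(z)_{ij}$. The same reduction applied to the OU process yields $(\cA_{ii}+\cA_{jj})\Cov(Z)_{ij}$, so that upon subtraction the linear-in-covariance part collapses to exactly $(\cA_{ii}+\cA_{jj})\Cov^\Delta_{ij}$, which is the rate appearing in the exponent of~\eqref{eq:estimateij}. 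Everything else is collected into the remainders $\E[R^F_i(z)(z_j-\mu_j)]$, $\E[R^F_j(z)(z_i-\mu_i)]$ and $\E[R^G_{ij}(z)]$.

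These remainders are then bounded precisely as in the diagonal case. The Lipschitz estimate~\eqref{eq:PLip1} for $R^F$, together with $R^F(0)=0$ (which holds because $z^*=0$ and $A_hu_h^*+P_hf(u_h^*)=0$ by~\ref{a:spectrum}), gives $|R^F_k(z)|\leq(\|\cA\|+C_F)\|z\|$; combining this with $|z_l-\mu_l|\leq\|z\|+|\mu_l|$ and the Cauchy--Schwarz inequality produces contributions of the form $\E[\|z\|^2]$ and $|\mu_l|\,\E[\|z\|]$, while the noise remainder is controlled through~\eqref{eq:PLip2} as $|\E[R^G_{ij}(z)]|\leq C_G\upsilon^2$. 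After the same rescaling by the diagonal entries used in the diagonal case, these bounds assemble into the function $\eta_{ij}(t)$ defined above, so that $\frac{\txtd}{\txtd t}\Cov^\Delta_{ij}\leq(\cA_{ii}+\cA_{jj})\Cov^\Delta_{ij}+\eta_{ij}(t)$. Integrating and recalling $\eta^*_{ij}(t)=\int_0^t\eta_{ij}(s)~\txtd s$ turns this into the integral inequality $\Cov^\Delta_{ij}(t)\leq\Cov^\Delta_{ij}(0)+\eta^*_{ij}(t)+(\cA_{ii}+\cA_{jj})\int_0^t\Cov^\Delta_{ij}(s)~\txtd s$, to which Gronwall's inequality is applied exactly as in the passage from~\eqref{eq:tbG} to Lemma~\ref{lem:cii}, with $\Cov^\Delta_{ij}(0)+\eta^*_{ij}(t)$ playing the role of the non-decreasing forcing term.

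The step I expect to require the most care is bookkeeping rather than any new idea: one must carry both distinct diagonal entries $\cA_{ii}$ and $\cA_{jj}$ through the whole computation so that the exponent of the final envelope is the \emph{sum} $\cA_{ii}+\cA_{jj}$ and not $2\cA_{ii}$, and one must verify that the Gronwall step with this \emph{negative} effective rate (both $\cA_{ii}<0$ and $\cA_{jj}<0$ by~\ref{a:spectrum}) indeed yields the claimed exponentially decaying envelope once $\eta^*_{ij}$ is treated as non-decreasing. A minor point is that only the case $i>j$ need be written out, since $i<j$ follows by relabelling and $i=j$ is Lemma~\ref{lem:cii}.
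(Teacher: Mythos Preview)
Your proposal is correct and follows essentially the same approach as the paper, which simply states that ``a similar calculation as above'' together with Gronwall's inequality gives the result. In fact you have spelled out the algebraic bookkeeping (extracting the $(\cA_{ii}+\cA_{jj})\Cov^\Delta_{ij}$ term and bounding the remainders via~\eqref{eq:PLip1}--\eqref{eq:PLip2}) in more detail than the paper itself provides.
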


Of course, the estimates~\eqref{eq:estimateii}-\eqref{eq:estimateij} may blow-up as $t\ra +\I$ 
if the stationary distribution cannot be approximated well by an OU process. Indeed, if the 
noise is small, this is exactly the effect of large 
deviations~\cite{WentzellFreidlin,DaPratoZabczyk}, 
which are going to occur on an asymptotic time scale $\cO(\txte^{c/\psi^2})$ as $\psi\ra 0$. 
However, the estimate is rather explicit, i.e., if we know the Lipschitz constant, the spectral
gap, the noise level and/or have some a-priori knowledge of the norms $\|z\|$ and/or $\|z\|^2$, 
then $\cA_{ij}<0$ is going to give decay for the exponential terms 
so that the only remaining term is $-\eta_{ij}^*/(\cA_{ii}+\cA_{jj})$. The linear 
approximation~\eqref{eq:brSODElin} will be a good approximation for a certain 
initial time-scale. The worst-case bound is the following:

\begin{thm}
\label{thm:goal2}
Suppose the assumptions \ref{a:operator}-\ref{a:smallnoise} hold for all $t\in\cI$, 
then there exists a constant $C_\txtl>0$ and a constant $\eta^*_\cA:=\max_{ij}-
\eta_{ij}^*/(\cA_{ii}+\cA_{jj})$ such that
\be
\|\Cov(z(t))-\Cov(\tilde{Z}(t))\|_2\leq \eta^*_\cA+C_\txtl [\|\Cov(z_0)-\Cov(\tilde{Z}_0)\|_2] 
\txte^{-t\min_i |\cA_{ii}|}.
\ee
\end{thm}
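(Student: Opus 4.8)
The plan is to assemble the final spectral-norm bound from the entrywise estimates already established in Lemma~\ref{lem:cii} and Lemma~\ref{lem:cij}. Both lemmas give, for every pair $(i,j)$, an exponential bound on the matrix entries $\Cov^\Delta_{ij}(t)$ of the difference matrix $\Cov^\Delta(t)=\Cov(z(t))-\Cov(Z(t))$. Since $\cA$ has been diagonalized and its spectrum lies in the open left half-plane by assumption~\ref{a:spectrum}, each diagonal entry satisfies $\cA_{ii}<0$, so $\cA_{ii}+\cA_{jj}\leq -2\min_i|\cA_{ii}|<0$ for the diagonal case and $\cA_{ii}+\cA_{jj}\leq -2\min_i|\cA_{ii}|$ for off-diagonal terms as well. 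The key observation is that every exponential factor $\txte^{2\cA_{ii}t}$ and $\txte^{(\cA_{ii}+\cA_{jj})t}$ appearing in the entrywise bounds can be dominated uniformly by the single worst-case factor $\txte^{-t\min_i|\cA_{ii}|}$, up to adjusting the polynomial/constant prefactors.

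The steps I would carry out, in order, are the following. First, introduce $\eta^*(t):=\max_{ij}\eta_{ij}(t)$ as in the statement and note it is non-decreasing, so that each $\eta^*_{ij}(t)=\int_0^t\eta_{ij}(s)\,\txtd s\leq t\,\eta^*(t)$ is controlled by it. Second, bound each entry uniformly:
\be
|\Cov^\Delta_{ij}(t)|\leq \left[\,|\Cov^\Delta_{ij}(0)|+\eta^*(t)\,\right]\txte^{-2t\min_i|\cA_{ii}|}
\ee
by combining Lemmas~\ref{lem:cii}--\ref{lem:cij} and replacing each individual decay rate by the slowest one $-\min_i|\cA_{ii}|$ (the factor $2$ in the exponent being harmless, as we only need an upper bound of the claimed form). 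Third, pass from entrywise control to a norm bound: on the finite-dimensional space $\R^{N\times N}$ the spectral norm $\|\cdot\|_2$ is bounded by a constant multiple of the Frobenius norm, and the Frobenius norm is controlled by $N\max_{ij}|\Cov^\Delta_{ij}|$; likewise $\|\Cov(z_0)-\Cov(Z_0)\|_2\geq \max_{ij}|\Cov^\Delta_{ij}(0)|$ up to a constant. Absorbing all dimension-dependent and norm-equivalence constants into a single $C_\txtl>0$ yields the claimed estimate.

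The main obstacle is keeping the constant $C_\txtl$ clean and honest. The naive passage from entrywise bounds to $\|\cdot\|_2$ via the Frobenius norm introduces a factor that grows with $N$ (typically $\sqrt{N}$ or $N$), which one should either track explicitly or justify hiding inside $C_\txtl$; since the theorem states $C_\txtl$ as a fixed constant, I would make explicit that the entrywise bounds are uniform in $(i,j)$ and that the norm equivalence on $\R^{N\times N}$ supplies a factor depending only on $N$, then fold that into $C_\txtl$ with the remark (consistent with the surrounding text) that all such constants depend on the problem data but not on the discretization refinement. A secondary subtlety is the replacement of the two genuinely different decay rates $2\cA_{ii}t$ and $(\cA_{ii}+\cA_{jj})t$ by the common envelope $\txte^{-t\min_i|\cA_{ii}|}$: one must check the direction of the inequality, namely that $\txte^{(\cA_{ii}+\cA_{jj})t}\leq \txte^{-t\min_i|\cA_{ii}|}$ for $t\geq 0$, which holds precisely because $\cA_{ii}+\cA_{jj}\leq -\min_i|\cA_{ii}|$ under the strict negativity of the spectrum, so the worst-case bound is legitimate.
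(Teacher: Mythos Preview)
Your proposal is correct and follows essentially the same route as the paper: combine the entrywise bounds of Lemmas~\ref{lem:cii}--\ref{lem:cij}, replace the individual decay rates $\txte^{(\cA_{ii}+\cA_{jj})t}$ by the uniform envelope $\txte^{-t\min_i|\cA_{ii}|}$, and pass from the entrywise maximum to the spectral norm via norm equivalence on $\R^{N\times N}$. The paper's proof is in fact terser and less explicit than yours---it does not spell out the $N$-dependence hidden in $C_\txtl$, nor the distinction between $\max_{ij}\eta^*_{ij}(t)$ (the integrated quantities from the lemmas) and $\eta^*(t):=\max_{ij}\eta_{ij}(t)$ (the pointwise maximum in the theorem statement)---so your added care on these two points is an improvement in clarity rather than a deviation in strategy.
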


\begin{proof}
Using Lemma~\ref{lem:cii} and Lemma~\ref{lem:cij} the result easily follows.
\end{proof}

In summary, Theorem~\ref{thm:goal2} just states that one has to be extremely careful to
trust a local linear approximation in a numerical context for problems with large noise
and/or a very strong nonlinearity, which both lead to a very quick pathwise sampling of a 
non-Gaussian stationary distribution. However, for small noise, sub-exponential time scales
and/or a weak nonlinearity, the local approximation via linearization and covariance 
operators of an OU process is going to correctly expose the relevant directions of
fluctuations.

\section{SODEs - Noise Truncation}
\label{sec:truncate}

In many practical applications, one tends to make one further approximation which we also
discuss here. The main observation is that for sufficiently fast decay of the eigenvalues 
$\lambda_{Q,i}$, it is highly beneficial in practical computations to consider a further 
approximation  
\benn
W^N(t)\approx \sum_{i=1}^R \sqrt{\lambda_{Q,i}}\beta_i(t)e_i=:W^R(t)
\eenn
for some $R\leq N$, i.e., one truncates the noise. This means we now have to compare 
two OU process given by our previous linear SODE
\be
\label{eq:brSODElin1}
\txtd \tilde{Z} = \cA \tilde{Z}~\txtd t+\tilde{B}~\txtd W^N,\qquad \tilde{Z}(0)=\tilde{Z}_0.
\ee
as well as the noise-truncated SODE
\be
\label{eq:brSODElin1truncated}
\txtd U = \cA U~\txtd t+B~\txtd W^R,\qquad U(0)=\tilde{Z}_0.
\ee
Here $B$ denotes the reduction of the matrix $\tilde{B}\in\R^{N\times N}$ to the first 
$N\times R$ block. In particular, we have to compare $\textnormal{Cov}(\tilde{Z})$ and 
$\textnormal{Cov}(U)$. 

\begin{thm}
\label{thm:truncate}
Considering the SODEs~\eqref{eq:brSODElin1} and~\eqref{eq:brSODElin1truncated}
we have
\benn
\sup_{t\in\cI} \|\Cov(\tilde{Z})-\Cov(U)\|_2\leq C_{tr} \sum_{i=R+1}^N\lambda_{Q,i}
\eenn 
where $C_{tr}>0$ is a constant.
\end{thm}

\begin{proof}
As in Section~\ref{sec:SPDEcov}, we find that
\benn
\sup_{t\in\cI} \|\Cov(\tilde{Z})-\Cov(U)\|_2\leq C~ \sup_{t\in\cI} \|\Cov(\tilde{Z}-U)\|_2 
\eenn
where $C>0$ is some constant. Then we observe that the process $\tilde{Z}-U$
satisfies the SODE
\benn
\txtd (\tilde{Z}-U) = \tilde{B}~\txtd W^N-B~\txtd W^R=B_{N-R}\txtd W^{N-R}
\eenn
for a matrix $B_{N-R}\in\R^{(N-R)\times N}$ since the drift term and the first 
$R$ noise components are identical. Therefore, the result follow from the fact
that for a $Q$-Wiener process the covariance matrix is given by $tQ$.
\end{proof}

The main conclusion from Theorem~\ref{thm:truncate} is that we can decrease
the noise truncation error by making $R$ larger but that this yields a matrix $B$ of
higher rank if $R$ is closer to $N$; we shall see that the rank of $B$ is actually
crucial in low-rank approximation calculations later on. 

\section{Lyapunov Equation - Algebraic Reduction}
\label{sec:Lyapunov}

Recall that we have now considered a spatial discretization of the initial SPDE and we
have localized the problem near a locally deterministically stable steady state via 
linearization. We calculated upper bounds on the discretization error and on the 
linearization error for a finite time scale, including the noise truncation error. 
However, although we now can work with the linear SODE problem 
\be
\label{eq:brSODElinU}
\txtd U = \cA U~\txtd t+B~\txtd W^R,\qquad U(0)=U_0,
\ee
we are still surprisingly \emph{far from a practical computable problem for many 
applications!} It is well-known~\cite{BerglundGentz,KuehnSDEcont1} that 
$V(t):=\Cov(U(t))$ satisfies the matrix ordinary differential equation (ODE) 
given by
\be
\label{eq:Lyatemp}
\frac{\txtd }{\txtd t} V= \cA V+V\cA^\top + BB^\top=:L_{\cA}V + \cB.
\ee
The stationary problem is given by
\be
\label{eq:Lya}
0= \cA V+V\cA^\top + \cB.
\ee
It is well-known~\cite{Antoulas}, that under the assumption~\ref{a:spectrum}, 
there exists a unique stationary solution $V_*$ to~\eqref{eq:Lya}, which is 
stable for the time-dependent problem~\eqref{eq:Lyatemp}. However, we work on 
a finite-time interval $\cI$ so we need a convergence rate.

\begin{thm}
\label{thm:goal3}
Suppose~\ref{a:spectrum} holds so that $\cA$ also has a spectral gap then 
there exists a constant $C_\tau>0$ such that
\be
\label{eq:decayV}
\|V(t)-V_*\|_2\leq C_\tau (\|V(0)-V_*\|_2)
\txte^{-t\min_i(|\textnormal{Re}(\lambda_i)|)}
\ee
or, alternatively
\be
\label{eq:decayV2}
\|V(t)-V_*\|_2\leq C_\tau(H) ~(\|V(0)-V_*\|_2)\txte^{-2t/\|H\|_2},
\ee
where $H$ solves $\cA H+H\cA^\top+2\Id=0$. 
\end{thm}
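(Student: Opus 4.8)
The plan is to analyze the linear matrix ODE satisfied by the deviation $\Delta(t) := V(t) - V_*$ from the stationary solution. Subtracting the stationary equation~\eqref{eq:Lya} from the time-dependent equation~\eqref{eq:Lyatemp}, the constant term $\cB$ cancels and one obtains the homogeneous Lyapunov-type ODE
\be
\frac{\txtd}{\txtd t}\Delta = \cA\Delta + \Delta\cA^\top = L_\cA \Delta,\qquad \Delta(0)=V(0)-V_*.
\ee
The solution of this linear ODE is $\Delta(t) = \txte^{t\cA}\Delta(0)\txte^{t\cA^\top}$, which one verifies directly by differentiating. This reduces the entire problem to bounding the $2$-norm of this sandwiched exponential.

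For the first estimate~\eqref{eq:decayV}, I would take norms and use submultiplicativity together with $\|\txte^{t\cA^\top}\|_2 = \|\txte^{t\cA}\|_2$ to get $\|\Delta(t)\|_2 \leq \|\txte^{t\cA}\|_2^2\,\|V(0)-V_*\|_2$. Since assumption~\ref{a:spectrum} guarantees $\cA$ has spectrum in the open left half-plane with a spectral gap, and since~\ref{a:operator} makes $\cA$ symmetric (hence normal, with real eigenvalues $\lambda_i<0$), the semigroup norm satisfies $\|\txte^{t\cA}\|_2 = \txte^{-t\min_i|\textnormal{Re}(\lambda_i)|}$ exactly in the symmetric case. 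Squaring this gives decay rate $2\min_i|\textnormal{Re}(\lambda_i)|$; collecting a constant $C_\tau\geq 1$ to absorb any non-normal transient behaviour (were $\cA$ merely stable rather than symmetric, one would invoke a standard bound $\|\txte^{t\cA}\|_2\leq C\,\txte^{-\alpha t}$ with $\alpha$ below the spectral abscissa) yields~\eqref{eq:decayV}.

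For the alternative estimate~\eqref{eq:decayV2} via the auxiliary matrix $H$ solving $\cA H + H\cA^\top + 2\Id = 0$, I would use the Lyapunov-function approach. Define the quadratic functional $\phi(t) := \langle \Delta(t), H^{-1}\Delta(t)\rangle$ in a suitable inner product (for symmetric $\cA$ and $H$ one checks $H$ is positive definite, since $H = 2\int_0^\I \txte^{s\cA}\txte^{s\cA^\top}\,\txtd s$). Differentiating $\phi$ along the flow and substituting the defining equation for $H$ produces a dissipation inequality of the form $\frac{\txtd}{\txtd t}\phi \leq -\frac{2}{\|H\|_2}\phi$, after estimating the generated quadratic form from below by $\frac{2}{\|H\|_2}$ times the $H^{-1}$-weighted norm. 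Integrating this differential inequality and converting back from the $H^{-1}$-weighted norm to the $2$-norm (picking up the condition-number-type constant $C_\tau(H)$ in the process) delivers~\eqref{eq:decayV2}.

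The main obstacle I anticipate is the conversion between the weighted Lyapunov norm and the spectral norm in the second estimate: obtaining the clean rate $2/\|H\|_2$ requires carefully matching the dissipation term against the correct multiple of $\phi$, and one must track how the equivalence constants between $\|\cdot\|_2$ and the $H^{-1}$-weighted norm feed into $C_\tau(H)$. A secondary subtlety is justifying that the spectral gap of the \emph{discretized} operator $\cA$ is inherited from~\ref{a:spectrum}; since $\cA$ is symmetric with real negative eigenvalues this is clean, but it is worth stating explicitly that $\min_i|\textnormal{Re}(\lambda_i)|>0$ is exactly the spectral gap. The first estimate, by contrast, should be essentially immediate from the explicit solution formula once normality is invoked.
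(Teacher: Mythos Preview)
Your proposal is correct and rests on the same core identity as the paper: you subtract the stationary equation to get the homogeneous problem and write the solution explicitly as $\Delta(t)=\txte^{t\cA}\Delta(0)\txte^{t\cA^\top}$, then bound via $\|\txte^{t\cA}\|_2^2$. For~\eqref{eq:decayV} the paper phrases the same conclusion slightly differently, invoking the classical fact that the eigenvalues of $L_\cA$ are $\lambda_i+\lambda_j$~\cite{Bellman}; your semigroup-norm argument for normal $\cA$ is an equivalent route and even makes the factor~$2$ in the rate explicit. The only genuine divergence is in~\eqref{eq:decayV2}: the paper stays with the explicit solution formula and simply cites~\cite[Theorem~3.1]{HuLiu} for the bound $\|\txte^{t\cA}\|_2^2\leq C_\tau(H)\,\txte^{-2t/\|H\|_2}$, whereas you propose to derive this by a Lyapunov functional $\phi(t)=\langle\Delta(t),H^{-1}\Delta(t)\rangle$ on the matrix-valued $\Delta$. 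Your approach is essentially a re-derivation of the Hu--Liu result, and it would be cleaner (and closer to the reference) to run the Lyapunov argument at the vector level---i.e., for $x(t)=\txte^{t\cA}x_0$ consider $\psi(t)=x(t)^\top H^{-1}x(t)$---rather than at the matrix level, where the differentiation picks up four terms and the bookkeeping is heavier. Either way the rate $2/\|H\|_2$ and the condition-number constant $C_\tau(H)$ fall out as you anticipate.
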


\begin{proof}
For \eqref{eq:decayV}, one first uses that the eigenvalues of the linear 
operator $L_\cA$ are given by $\lambda_i+\lambda_j$ where 
$\{\lambda_k\}_{k=1}^N$ are the eigenvalues of $\cA$~\cite{Bellman}.
For~\eqref{eq:decayV2}, we use that $V_*$ is a steady state of~\eqref{eq:Lyatemp}
to obtain
\begin{align*}
 V(t)&=V_*+\mathrm{exp}(t \cA)(V(0)-V_*)\mathrm{exp}(t\cA^\top)\\
 \Rightarrow\quad \|V(t)-V_*\|_2&\leq \|\mathrm{exp}(\cA t)\|_2^2\|V(0)-V_*\|_2
\end{align*}
Then by \cite[Theorem 3.1]{HuLiu}
\begin{align*}
 \|\mathrm{exp}(t\cA)\|^2_2\leq C_\tau(H)~
\mathrm{exp}\left(\frac{-2t}{\|H\|_2}\right),
\end{align*}
which leads to \eqref{eq:decayV2}.
\end{proof}

Basically, Theorem~\ref{thm:goal3} gives an estimate, which allows us to 
reduce the computation to the Lyapunov equation~\eqref{eq:Lya} to the stationary
case as long we do not start far away from the locally linearized approximate
solution. Unfortunately, direct solution methods, such as the Bartels-Stewart 
algorithm~\cite{BartelsStewart} are unlikely to work as the space dimension 
$N$ grows drastically in practice as we decrease $h$. Furthermore, $N$ increases 
due to the curse of dimensionality as $d$ increases. Direct solution methods 
come with a complexity $\cO(N^3)$ and with storage requirements of $\cO(N^2)$, 
which limits their applicability to problems of moderate sizes. Therefore, 
we have to use special methods for large-scale Lyapunov equation 
that work with complexities and storage requirements linear in $N$.  

\section{Lyapunov Equation - Low-Rank and Computation}
\label{sec:iteration}

In this section, we want to consider and numerically compute a low-rank 
approximation of $V_*$. This involves two steps, which can be be accomplished 
by carefully tracing the literature: (1) understanding error estimates for the 
low-rank approximation and (2) finding an error estimate for the computation
in a low-rank iterative algorithm for solving Lyapunov equations.

\subsection{Low-rank Approximations and Singular Value Decay}
\label{ssec:lowrank}

Consider the singular value decomposition of $V_*$:
\begin{align*}
 V_*&=Y\Sigma X^\top, \quad Y^\top Y=\Id=X^\top X,\\
 \Sigma&=\mathrm{diag}(\sigma_1(V_*),\ldots,\sigma_N(V_*)),\\
 \sigma_1(V_*)&\geq\ldots\geq\sigma_{\ell}(V_*)>\sigma_{\ell+1}(V_*)=\sigma_n(V_*)=0,
\end{align*}
where $\ell=\mathrm{rank}(V_*)$. The best approximation of $V_*$ of rank $\fr$ is,
by the Eckhart-Young theorem \cite[Theorem 2.4.8.]{GolubVanLoan}, obtained by
\be
\label{Eq:lowrankSVD}
V_*\approx V^{\textnormal{lr},\fr}_*
:=\sum\limits_{i=1}^{\mathfrak{r}} \sigma_iu_ix_i^\top=Y_\fr\Sigma_\fr X_\fr,
\ee
where $Y_\fr,~X_\fr$ contain the first $\fr$ columns (left and right singular 
vectors) of $Y,X$, and $\Sigma_\fr$ the $\fr$ largest singular
values. Note that since $V_*=V_*^\top $, $X_\fr=Y_\fr$ can be chosen.
The approximation error is given by
\be
\|V_*-V^{\textnormal{lr},\fr}_*\|_2\leq \sigma_{\fr+1}.
\ee
If the singular values of $V_*$ decay rapidly towards zero, a small error 
can be achieved with small values of $\fr$. In fact, solutions of large-scale 
Lyapunov equations with low-rank inhomogeneities often show a fast singular 
value decay. This phenomenon has been theoretically investigated, e.g., 
in~\cite{Penzl,Grasedyck,AntoulasSorensenZhou,Sabino,Beckermann2016}. By 
assumption~\ref{a:operator} we restrict to the symmetric case $\cA=\cA^\top$. 
The following basic estimate on the singular value decay
can be found in~\cite{Penzl}
\begin{align}
\label{Eq:sigmabound_Penzl}
 \sigma_{R\fr+1}(V_*)\leq \sigma_{1}(V_*)
\left(\prod\limits_{i=0}^{{\mathfrak{r}}-1}
\frac{\kappa(\cA)^{(2i+1)/(2{\mathfrak{r}})}-1}
{\kappa(\cA)^{(2i+1)/(2{\mathfrak{r}})}+1}\right)^2,
\quad 1\leq R\fr\leq N,
\end{align}
where $\kappa(\cA)=\|\cA\|_2\|\cA^{-1}\|_2$. A more precise bound is 
developed in~\cite{Sabino} using $\textnormal{spec}(\cA)\subset[a,b]$ 
and quantities related to elliptic functions and integrals, which we 
briefly recall in the following definition.

\begin{defn}
\label{Def:ellipint}
The \textit{elliptic integral of the first kind} defined on $[0,1]$ 
with respect to the \textit{modulus} $0<k<1$ is
\begin{align*}
 s_k(x):=\int_0^x\frac{1}{\sqrt{(1-t^2)(1-k^2t^2)}}~\txtd t.
\end{align*}
Define also the elliptic functions $\mathrm{sn}, \mathrm{dn}$ by
\begin{align*}
 \mathrm{sn}(s_k)=x(s_k),\quad \mathrm{dn}(s_k)=\sqrt{1-k^2\mathrm{sn}(s_k)}.
\end{align*}
The associated \textit{complete elliptic integral (w.r.t. the modulus 
$k$)} is the value $K:=s_k(1)$. The \textit{complementary modulus} is 
$k'=\sqrt{1-k^2}$ and the associated \textit{complementary complete elliptic
integral} by $K'=s_{k'}(1)$.
The \textit{nome} $q$ is defined by $q:=\exp{(-\pi K'/K)}$. 
\end{defn}

\begin{lem}(\cite{Lawden})
The nome $q$ and complementary modulus $k'$ also satisfy the 
identity~
\begin{align}
\label{Eq:modulusnome}
 \sqrt{k'}=\frac{1-2q+2q^4-2q^9+\ldots}{1+2q+2q^4+2q^9+\ldots}.
\end{align}
\end{lem}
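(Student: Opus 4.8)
The plan is to recognize the two alternating/non-alternating series as classical Jacobi theta functions and then invoke their standard relation to the elliptic modulus. First I would identify the right-hand side. Writing
\be
\theta_3(q):=\sum_{n\in\Z}q^{n^2}=1+2q+2q^4+2q^9+\cdots,\qquad
\theta_4(q):=\sum_{n\in\Z}(-1)^nq^{n^2}=1-2q+2q^4-2q^9+\cdots,
\ee
the asserted identity~\eqref{Eq:modulusnome} is exactly $\sqrt{k'}=\theta_4(q)/\theta_3(q)$. Both series converge absolutely, since by Definition~\ref{Def:ellipint} the nome satisfies $q=\exp(-\pi K'/K)\in(0,1)$ whenever $0<k<1$, and both series are strictly positive there; hence the quotient and its square root are unambiguous once the sign of $\sqrt{k'}$ is fixed (positively, as $k'=\sqrt{1-k^2}>0$).

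The core step is to recall the fundamental theta representation of the moduli,
\be
k=\frac{\theta_2^2(q)}{\theta_3^2(q)},\qquad k'=\frac{\theta_4^2(q)}{\theta_3^2(q)},
\ee
where $\theta_2(q):=2\sum_{n\geq0}q^{(n+1/2)^2}$. I would derive these from the theta-quotient expressions for the Jacobi elliptic functions $\mathrm{sn},\mathrm{cn},\mathrm{dn}$ together with their special values at the quarter period: evaluating $\mathrm{dn}$ at the complete elliptic integral $K=s_k(1)$ gives $\mathrm{dn}(K)=k'$, while the theta-quotient form of $\mathrm{dn}$ at the matching argument collapses to $\theta_4^2(q)/\theta_3^2(q)$; equivalently, one may start from $K=\tfrac{\pi}{2}\theta_3^2(q)$ and the classical $q$-series inversion linking the nome to the modulus. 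These are the standard Jacobi relations (cf.~\cite{Lawden}), so in the write-up I would cite them rather than reconstruct the entire theta apparatus.

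Given the second relation, the conclusion is immediate: taking the positive square root of $k'=\theta_4^2(q)/\theta_3^2(q)$ and using $\theta_3(q)>0$, $\theta_4(q)>0$ on $(0,1)$ yields $\sqrt{k'}=\theta_4(q)/\theta_3(q)$, which is precisely~\eqref{Eq:modulusnome}. As an internal consistency check one verifies $k^2+k'^2=1$ via Jacobi's quartic identity $\theta_3^4=\theta_2^4+\theta_4^4$, confirming that the $k'$ defined by the theta quotient coincides with the complementary modulus $k'=\sqrt{1-k^2}$ of Definition~\ref{Def:ellipint}. The only genuinely nontrivial ingredient, and hence the main obstacle, is establishing the theta representations $k=\theta_2^2/\theta_3^2$ and $k'=\theta_4^2/\theta_3^2$: this rests on the full theory of elliptic and theta functions (the periods of the Jacobi functions, their theta-quotient forms, and the special values at $K$), which is exactly why the statement is quoted from the literature rather than proved \emph{ab initio}.
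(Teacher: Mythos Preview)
Your proposal is correct: the series are precisely $\theta_4(q)$ and $\theta_3(q)$, and the classical identity $k'=\theta_4^2(q)/\theta_3^2(q)$ together with positivity gives the claim. The paper does not supply a proof of this lemma at all; it simply records the identity and cites \cite{Lawden}, so your sketch already goes further than the paper in spelling out the theta-function identification, while ultimately deferring to the same classical source for the nontrivial ingredient.
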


Using these quantities, the singular values of $V_*$ can be bounded 
by the following result:

\begin{thm}(\cite[Theorem 2.1.1.]{Sabino},\cite{EllnerWachspress})
\label{thm:svddecay}
Let $\cA=\cA^\top$ and $\textnormal{spec}(\cA)\in[a,b]$ with $a:=\min\lambda_i<b
:=\max\lambda_i<0$. Set the complementary modulus to $k'=b/a=1/\kappa(\cA)$ 
and set $k$, the complete elliptic integrals $K'$ and $K$, as well as the 
nome $q$ via the relations in Definition~\ref{Def:ellipint}. Then it holds 
for the singular values of the solution of~\eqref{eq:Lya}
\begin{align}
\label{Eq:sigmabound_Sabino}
 \sigma_{R\fr+1}(V_*)\leq \sigma_{1}(V_*)\left(
\frac{1-\sqrt{k'_\fr}}{1+\sqrt{k'_\fr}}\right)^2,\quad 1\leq R\mathfrak{r}\leq N,
\end{align}
where $k_\fr'$ relates to $q^\fr$ in the same way $k'$ is build from 
$q$ via~\eqref{Eq:modulusnome}:
\begin{align*}
\sqrt{k_\fr'}=\frac{1-2q^\fr+2q^{4\fr}-2q^{9\fr}+\ldots}
{1+2q^\fr+2q^{4\fr}+2q^{9\fr}+\ldots}.
\end{align*}
\end{thm}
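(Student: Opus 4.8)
The plan is to expose the low-rank structure of $V_*$ through the alternating direction implicit (ADI) iteration and to reduce the singular value bound to a rational minimax (Zolotarev) problem on the spectral interval $[a,b]$. First I would recall the ADI scheme for the stationary Lyapunov equation~\eqref{eq:Lya}: fixing $\fr$ shift parameters $p_1,\dots,p_\fr\in[a,b]$ and starting from $V_0=0$, the $\fr$-th ADI iterate $V_\fr$ obeys the closed-form error identity
\[
V_*-V_\fr = r_\fr(\cA)\,V_*\,r_\fr(\cA)^\top,\qquad r_\fr(z):=\prod_{i=1}^{\fr}\frac{z-p_i}{z+p_i},
\]
which follows by induction on the two half-steps of the iteration, using that $\cA=\cA^\top$ commutes with every factor and that $\cA+p_iI$ is invertible (its eigenvalues lie in $[2a,2b]\subset(-\infty,0)$). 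Because the inhomogeneity $\cB=BB^\top$ has rank at most $R$ and each ADI step augments the iterate by a block of at most $R$ columns, the iterate $V_\fr$ has rank at most $R\fr$.

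The second step turns this into a singular value bound. Since $V_\fr$ has rank at most $R\fr$, the Eckart--Young characterization already used in~\eqref{Eq:lowrankSVD} gives
\[
\sigma_{R\fr+1}(V_*)\le\|V_*-V_\fr\|_2\le\|r_\fr(\cA)\|_2^2\,\|V_*\|_2=\left(\max_{x\in\textnormal{spec}(\cA)}|r_\fr(x)|\right)^{2}\sigma_1(V_*),
\]
where the last equality uses that $\cA$ is symmetric, hence $r_\fr(\cA)$ is normal and its spectral norm equals the maximum modulus of $r_\fr$ over the eigenvalues, while $\|V_*\|_2=\sigma_1(V_*)$. Enlarging the discrete spectrum to the interval $[a,b]\supset\textnormal{spec}(\cA)$ only increases the bound, so it remains to choose the shifts $p_1,\dots,p_\fr$ to minimize $\max_{x\in[a,b]}|r_\fr(x)|$.

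The third and decisive step is the solution of this minimax problem. A direct computation gives $r_\fr(-z)=1/r_\fr(z)$, so making $|r_\fr|$ small on $[a,b]$ forces it large on the reflected interval $[-b,-a]$; with zeros $p_i\in[a,b]$ and poles $-p_i\in[-b,-a]$, minimizing $\max_{[a,b]}|r_\fr|$ over the shifts is precisely Zolotarev's third problem for the symmetric pair $\{[a,b],[-b,-a]\}$. I would invoke the classical Zolotarev solution as recorded in~\cite{Sabino,EllnerWachspress}: its extremal value is expressed through the elliptic quantities of Definition~\ref{Def:ellipint}, with the degree-$\fr$ problem corresponding to the substitution $q\mapsto q^\fr$ in the nome, so that
\[
\min_{p_1,\dots,p_\fr}\ \max_{x\in[a,b]}|r_\fr(x)|=\frac{1-\sqrt{k'_\fr}}{1+\sqrt{k'_\fr}},
\]
where $k'_\fr$ is built from $q^\fr$ through the relation~\eqref{Eq:modulusnome}. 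Substituting this optimal value into the singular value bound and squaring yields the claim. The main obstacle is exactly this last step: the explicit elliptic-function evaluation of the Zolotarev extremal value, and in particular the Landen-type degree reduction that produces $q^\fr$ from $q$. I would not reprove it but cite~\cite{Sabino,EllnerWachspress,Lawden}; everything preceding it---the ADI error identity, the rank count, and the normal-matrix functional calculus---is routine linear algebra.
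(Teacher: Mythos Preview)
The paper does not supply its own proof of this theorem; it is quoted verbatim as a result from \cite[Theorem~2.1.1]{Sabino} and \cite{EllnerWachspress}, and the surrounding discussion (the ADI error identity~\eqref{adierror_lyap}, the min--max problem~\eqref{adiminmax}, and Theorem~\ref{thm:goal4}) already lays out exactly the chain of ideas you describe. Your proposal---build a rank-$R\fr$ ADI iterate, invoke Eckart--Young to pass from the approximation error to $\sigma_{R\fr+1}(V_*)$, use the normal functional calculus to reduce to $\max_{[a,b]}|r_\fr|$, and then appeal to the Zolotarev solution with the Landen nome substitution $q\mapsto q^\fr$---is precisely the argument in those references, so there is nothing to compare: your outline \emph{is} the cited proof.
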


We stress that, while better than~\eqref{Eq:sigmabound_Penzl}, the 
bound~\eqref{Eq:sigmabound_Sabino} might not be very sharp in practice,
where one often observes an even faster singular value decay. One reason is 
that Theorem~\ref{thm:svddecay} only uses $\kappa(\cA)$, i.e., the extremal 
eigenvalue $a,b$ of $\cA$. More realistic, but also more difficult to compute, 
bounds can be obtained by using more than these two eigenvalue of 
$\cA$~\cite{Sabino}.\medskip 

{\small \textbf{Remark:} \textit{In case of non-symmetric $\cA$, the above bounds 
are not applicable. In this case the singular value decay of $V_*$ is more 
complicated and we refer to e.g., the discussions in~\cite{BakerEmbreeSabino,
Beckermann2016,Sabino}.}}\medskip

Judging by~\eqref{Eq:sigmabound_Sabino}, the decay of the singular values depends 
mainly on $\kappa(\cA)$ and the value $R=\mathrm{coldim}(B)$. For instance for 
fixed $a$, the closer $b$ to the origin, i.e.~the smaller the spectral gap, the 
larger $\kappa(\cA)$ and, consequently, the closer $k$, $k'$ will be to one 
and, respectively, zero. Hence, $K'$ and $K$ will tend towards $\frac{\pi}{2}$ 
and $\infty$, respectively, leading to $q$ close to one. In this extreme 
situation, $q^\fr$ will also be close to one, leading, in the end, to 
\benn
\frac{1-\sqrt{k'_\fr}}{1+\sqrt{k'_\fr}}\approx 1 
\eenn
such that no singular value decay might be observed. In particular, the main 
insight is that the spectral gap also plays a \emph{numerical analysis} role
at the error in step (S5). Furthermore, the column dimension $R$ of $B$ plays 
a significant role in the bound~\eqref{Eq:sigmabound_Sabino}. Recall the $B$ 
is the result from evaluating $g$ at $u^*$ and approximating the stochastic 
process $W(t)$. Recall that in (S3) we truncated the $Q$-Wiener process~\cite{DaPratoZabczyk} 
to obtain a numerical approximation~\cite{LordPowellShardlow}
\begin{align*}
W(t)\approx\sum\limits_{i=1}^R\sqrt{\lambda_{Q,i}}~\beta_i(t)~e_i
\end{align*}
for a basis $\{e_i\}_{i=1}^\I$ of $\cH$, eigenvalues $\lambda_{Q,i}$ 
of $Q$, and independent Brownian motions $\beta_i(t)$. 
The scalars $\lambda_{Q,i}$ form a non-increasing sequence. Hence, the 
slower the $\lambda_{Q,i}$ decrease, the higher the value of $R$ should be
chosen and, consequently, the slower the decay of the singular values of $V_*$.
This implies the key practical conjecture that space-time white noise would be 
more difficult to treat numerically than a $Q$-trace-class Wiener process
in our setting.
  
\subsection{Error Bounds for Numerically Computed Low-Rank Solutions}

Using the low-rank approximation~\eqref{Eq:lowrankSVD} is not practical as it 
requires to first obtain $V_*$ and then compute its singular valued decomposition. 
Numerical methods for large-scale Lyapunov equations~\cite{Simoncini,BennerSaak} 
typically directly compute low-rank factors $\cZ\in\R^{N\times \fr}$, $\fr\ll N$ that 
form a low-rank approximate solution $V^{\textnormal{lr},\fr}_*=\cZ\cZ^\top$ which 
will, however, not be optimal in the sense of the SVD based approximation~\eqref{Eq:lowrankSVD} 
but close if the method is properly executed. The advantage of these methods is 
that they are able to provide accurate low-rank solutions in a very efficient manner 
by utilizing tools from large-scale numerical linear algebra. By exploiting, e.g., 
the sparsity of $\cA$ and the low-rank $R$ of the inhomogeneity, state-of-the-art 
methods~\cite{Simoncini,BennerSaak} are able to compute low-rank solution factors 
at complexities and memory requirements of $\cO(N)$.\medskip

One iterative method for solving \eqref{eq:Lya} is based on the fact that for 
any $\alpha\in\C_-$,
\eqref{eq:Lya} is equivalent to the matrix equation
\begin{align*}
X&=\cC(\alpha)X\cC(\alpha)+\cB(\alpha)\cB(\alpha)^H\\
\text{with}\quad\cC(\alpha)&:=(\cA+\alpha \Id)^{-1} 
(\cA-\overline{\alpha}\Id),\quad \cB(\alpha):=
\sqrt{-2\textnormal{Re}(\alpha)}(\cA+\alpha \Id)^{-1}B,
\end{align*}
where $(\cdot)^H$ is the Hermitian conjugate.
This motivates the self-evident iteration scheme 
\begin{align*}
 X_j=\cC(\alpha_j)X_{j-1}\cC(\alpha_j)+\cB(\alpha_j)\cB(\alpha_j)^H
\end{align*}
for varying $\alpha_j\in\C_-$, which is the alternating directions implicit 
(ADI) iteration for Lyapunov equations~\cite{Wachspress1}. In order to be 
applicable to large-scale equations, one uses $X_0=0$ and, after a series of 
basic algebraic manipulation~\cite{LiWhite}, one arrives at the 
low-rank ADI (LR-ADI) iteration~\cite{LiWhite,BennerKuerschnerSaak,Kuerschner,Kuerschner2}
\begin{align*}
 H_j&=(\cA+\alpha_j\Id)^{-1}\cW_{j-1},\quad \cW_j=\cW_{j-1}-2\textnormal{Re}(\alpha_j)H_j,\\
 \cZ_j&=[\cZ_{j-1},~\sqrt{-2\textnormal{Re}(\alpha_j)}H_j]\in\C^{N\times Rj}
\end{align*}
for $\cW_0:=B$, $j\geq 1$. It produces low-rank approximations of the solution 
of~\eqref{eq:Lya} of the form $V_*\approx V_j:=\cZ_j\cZ_j^H$. 
The numbers
$\alpha_j\in\C_-$ are referred to as shift parameters and are crucial for a fast 
convergence  of the LR-ADI iteration. Obviously, the main numerical effort of the 
LR-ADI iteration comes from the solution of the linear systems of equations
$(\cA+\alpha_j\Id)H_j=W_{j-1}$ for $H_j$ which can for sparse $\cA$ be done 
efficiently by sparse-direct~\cite{DuffErismanReid} or iterative solvers~\cite{Saad}.
However, the number of right hand sides in each linear systems is given by the key quantity $R$. We see here that the higher $R$, the larger the numerical effort of the LR-ADI iteration becomes. 
The error of the constructed low-rank approximation $V_j$ is given by 
\begin{align}
\label{adierror_lyap}
 V_j-V_*=\cJ_jV_*\cJ_j^H,\quad
\cJ_j:=\prod\limits_{i=1}^{j}\cC_i,\quad \cC_i:=\cC(\alpha_i)
\end{align}
such that
\begin{align*}
 \|V_j-V_*\|_2&\leq \kappa(S)^2\|V_*\|_2R^2_j,\quad 
\Theta_j:=\prod\limits_{i=1}^{j}\rho_i,\\
 \rho_i&=\rho(\cC_i)=\max_{z\in\textnormal{spec}(\cA)}
\left|\frac{z-\overline{\alpha_i}}{z+\alpha_i}\right|,
\end{align*}
and $S$ is the matrix containing the eigenvectors of $\cA$. Since 
$\textnormal{spec}(\cA)\in\C_-$ and $\alpha_i\in\C_-$, it holds $\rho_i<1$, 
$\forall i\geq 1$ and, thus, $\Theta_j=\rho_j\Theta_{j-1}<\Theta_{j-1}$, 
indicating that the sequence of the spectral radii $\Theta_j$ is monotonically 
decreasing and, in the limit, will approach the value zero. The shifts $\alpha_i$ 
should therefore be chosen such that $\Theta_j$ is as small as possible leading 
to the ADI parameter problem
\begin{align}
\label{adiminmax}
 \lbrace \alpha^*_1,\ldots,\alpha^*_j\rbrace = \mathrm{arg}\min
\Theta_j=\mathrm{arg}\min\limits_{\alpha_i\in\C_-}
\max\limits_{z\in\textnormal{spec}(\cA)}\left|\prod\limits_{i=1}^{j}
\frac{z-\overline{\alpha_i}}{z+\alpha_i}\right|.
\end{align}
This problem is in general a formidable task which has been addressed in numerous works, 
e.g., in~\cite{EllnerWachspress,Wachspress1,Sabino,BennerKuerschnerSaak,Kuerschner}. 
Again, the situation simplifies for the important case $\cA=\cA^\top$, where real 
shifts are usually sufficient (and $\cZ_j$ will also be real). Note that in this 
case $\kappa(S)=1$. In summary, we have the following relevant result for our
purposes:

\begin{thm}(\cite{Wachspress2,Sabino,Wachspress1})
\label{thm:goal4}
With the same assumptions and settings for $k$, $k'$, $K$, $K'$, $q$ as 
in Theorem~\ref{thm:svddecay}, construct real shifts parameters
$\alpha_1,\ldots,\alpha_j\in\R_-$ by 
 \begin{align}\label{optishift}
 \alpha_i=a\mathrm{dn}((2i-1)K/2j),~1\leq i\leq j 
\end{align}
with the elliptic function $\mathrm{dn}$ from Definition~\ref{Def:ellipint}. 
Using these shifts, the smallest value of the spectral radius 
$\Theta_j=\Theta_j(\alpha_1,\ldots,\alpha_j)$ in~\eqref{adiminmax} is 
$\frac{1-\sqrt{k'_j}}{1+\sqrt{k'_j}}$, where the
modulus $k'_j$ is associated to the nome $q^j$ via~\eqref{Eq:modulusnome}. Hence,
carrying out $j$ steps of the LR-ADI iteration using \eqref{optishift} yields
\begin{align*}
 \|V_j-V_*\|_2&\leq\|V_*\|_2\left(\frac{1-\sqrt{k'_j}}{1+\sqrt{k'_j}}\right)^2.
\end{align*}
\end{thm}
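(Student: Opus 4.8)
The plan is to reduce the statement to a classical Zolotarev rational-approximation problem and then feed its known solution into the error representation~\eqref{adierror_lyap}. First I would dispose of the operator-theoretic part. By~\eqref{adierror_lyap} we have $V_j - V_* = \cJ_j V_* \cJ_j^H$ with $\cJ_j = \prod_{i=1}^j \cC_i$ and $\cC_i = \cC(\alpha_i) = (\cA+\alpha_i\Id)^{-1}(\cA-\alpha_i\Id)$ (real shifts, so $\overline{\alpha_i} = \alpha_i$). Assumption~\ref{a:operator} gives $\cA = \cA^\top$, so each $\cC_i$ is a rational function of a single symmetric matrix; consequently the $\cC_i$ commute and are simultaneously diagonalized by the orthogonal eigenvector matrix $S$ of $\cA$, whence $\kappa(S) = 1$. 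The eigenvalues of $\cJ_j$ are $\prod_{i=1}^j \frac{z-\alpha_i}{z+\alpha_i}$ as $z$ ranges over $\textnormal{spec}(\cA)$, so
\[
\|\cJ_j\|_2 = \max_{z\in\textnormal{spec}(\cA)}\left|\prod_{i=1}^j\frac{z-\alpha_i}{z+\alpha_i}\right| = \Theta_j,
\]
and submultiplicativity yields $\|V_j-V_*\|_2 \leq \|\cJ_j\|_2^2\|V_*\|_2 = \Theta_j^2\|V_*\|_2$. Thus the entire statement rests on showing that the shifts~\eqref{optishift} minimize $\Theta_j$ and on identifying the optimal value.

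For the minimax problem~\eqref{adiminmax} I would first relax the discrete constraint: since $\textnormal{spec}(\cA) \subset [a,b]$ with $a < b < 0$, enlarging the maximum to run over all of $[a,b]$ only increases the objective, so it suffices to solve
\[
\min_{\alpha_i\in\R_-}\ \max_{z\in[a,b]}\ \left|\prod_{i=1}^j\frac{z-\alpha_i}{z+\alpha_i}\right|,
\]
and any resulting value is a valid upper bound for $\Theta_j$. This is precisely Zolotarev's third problem: among rational functions of type $(j,j)$ with zeros at the $\alpha_i$ and poles at the reflected points $-\alpha_i$, minimize the uniform norm on $[a,b]$. A M\"obius change of variable normalizing the interval through the modulus $k' = b/a = 1/\kappa(\cA)$ --- exactly the quantity fixed in Theorem~\ref{thm:svddecay} --- brings the problem into the standard symmetric form solvable by Jacobi elliptic functions.

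At this point I would invoke the classical Zolotarev solution, as recorded in the cited works~\cite{Wachspress2,Sabino,Wachspress1}. The extremal rational function is characterized by equioscillation on $[a,b]$, and its zeros are located at $a\,\mathrm{dn}((2i-1)K/2j)$ with $\mathrm{dn}$ from Definition~\ref{Def:ellipint} --- precisely the shifts~\eqref{optishift} (note $a < 0$ and $\mathrm{dn} > 0$ give $\alpha_i \in \R_-$). The corresponding minimal uniform norm equals $\frac{1-\sqrt{k'_j}}{1+\sqrt{k'_j}}$, where the modulus $k'_j$ is built from the nome $q^j$ through the theta-quotient identity~\eqref{Eq:modulusnome}; the substitution of $q^j$ for $q$ reflects the $j$-fold structure of the underlying elliptic transformation. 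Combining this optimal $\Theta_j$ with the reduction of the first paragraph gives $\|V_j - V_*\|_2 \leq \Theta_j^2\|V_*\|_2 = \|V_*\|_2\big(\frac{1-\sqrt{k'_j}}{1+\sqrt{k'_j}}\big)^2$, as claimed.

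The main obstacle is the elliptic-function optimality argument itself: verifying that the dn-shifts produce exactly the equioscillation pattern demanded by the Chebyshev-type characterization of best rational approximants, and translating the extremal value into the nome/modulus language of~\eqref{Eq:modulusnome}. Since this is the classical solution of Zolotarev's problem, I would cite it rather than reprove it; the only steps genuinely specific to the present setting are the commutativity reduction $\kappa(S)=1$ of the first paragraph and the spectrum-to-interval relaxation, the latter of which preserves the inequality (the true discrete minimax value can only be smaller) so that the stated bound remains valid.
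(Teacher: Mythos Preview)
Your proposal is correct and matches the paper's treatment: the paper does not give an independent proof of this theorem but states it as a cited result from~\cite{Wachspress2,Sabino,Wachspress1}, having already set up exactly the ingredients you use (the error identity~\eqref{adierror_lyap}, the observation $\kappa(S)=1$ in the symmetric case, and the minimax problem~\eqref{adiminmax}). Your sketch fills in the reduction to Zolotarev's third problem and the spectrum-to-interval relaxation more explicitly than the paper does, but the overall route is the same.
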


We can, thus, expect to approximate the solution $V_*$ by the LR-ADI low-rank 
approximation $V_j$ at a speed similar to the predicted singular value decay by 
Theorem~\ref{thm:svddecay}. Other low-rank algorithms for solving \eqref{eq:Lya}, 
e.g., rational Krylov subspace methods allow similar error 
bounds~\cite{DruskinKnizhnermanSimoncini,Beckermann2012}.

\section{Summary and Main Result}
\label{sec:result}

Although we stated and proved all our main error estimates, it is helpful to summarize 
the results to provide a CERES. Recall from the introduction that our setup considered 
four steps. To combine the four steps, we have to link operators on $\dot{\cH}^1$ with 
the finite-dimensional approximation spaces $\cS_h$. If $L_h:\cS_h\ra \cS_h$ is a 
finite-dimensional linear operator, then we can always view it as an infinite-dimensional 
linear operator $L$ on $\dot{\cH}^1$ by declaring basis vectors
not in $P_h \dot{\cH}^1$ to lie in $\textnormal{nullspace}(L)$.

\begin{thm}
\label{thm:main}
Suppose \ref{a:operator}-\ref{a:smallnoise} hold. Let $\textnormal{Cov}(u)$ denote the 
covariance operator of the SPDE~\eqref{eq:SPDE}. Then a low-rank solution $V_j$ of the 
locally linearized discretized problem on $\cS_h$ near the steady state $u^*$ computed 
after $j$ ADI steps satisfies the CERES
\be
\label{eq:goalfinal}
\sup_{t\in[0,T]}\|\textnormal{Cov}(u)-V_j\|_{\cL_2}\lesssim\sup_{t\in\cI}\left[ 
\textnormal{err}_{\textnormal{(S1)}}
+\textnormal{err}_{\textnormal{(S2)}}+\textnormal{err}_{\textnormal{(S3)}}
+\textnormal{err}_{\textnormal{(S4)}}+\textnormal{err}_{\textnormal{(S5)}}\right]
\ee 
and the individual error terms are given by
\bea
\textnormal{err}_{\textnormal{(S1)}}&=& C_\txtd h^{1+r},\\ 
\textnormal{err}_{\textnormal{(S2)}}&=&  
\eta^*_\cA+C_\txtl [\|\Cov(u_h(0))-\Cov(\tilde{U}_h(0))\|_2] \txte^{-t\min_i |\cA_{ii}|},\\ 
\textnormal{err}_{\textnormal{(S3)}}&=& C_{tr} \sum_{i=R+1}^N \lambda_{Q,i}\\ 
\textnormal{err}_{\textnormal{(S4)}}&=& 
C_\tau (\|\Cov(U_h(0))-V_*\|_2)\txte^{t\max(\textnormal{spec}(\cA))}\\
\textnormal{err}_{\textnormal{(S5)}}&=& 
\|V_*\|_2 \left(1-\sqrt{k'_j}\right)^2/\left(1+\sqrt{k'_j}\right)^2,
\eea
where $C_\txtd,C_\txtl,C_\tau,k_j',C_{tr},\eta^*_\cA>0$ are constants depending upon the data 
(i.e.~on~$A,f,g,Q$), the terms $u_h(0),\tilde{U}_h(0)$ are initial conditions for the discretized
full and linearized problems, $\lambda_{Q,i}$ are eigenvalues of $Q$, $R\in\N$ is the noise 
truncation level, $V_*=\lim_{t\ra +\I}U_h(t)$ is the finite asymptotic 
limit for the stationary problem, and $\cA$ is the leading-order discretized linear 
operator part near $u^*$.
\end{thm}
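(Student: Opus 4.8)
The plan is to obtain \eqref{eq:goalfinal} from a single telescoping triangle inequality followed by substitution of the four estimates already proved in Theorems~\ref{thm:goal1}--\ref{thm:goal4}. First I would insert the three intermediate objects $\Cov(u_h)$, $\Cov(U_h)$ and $V_*$ between $\Cov(u)$ and $V_j$, writing
\benn
\Cov(u)-V_j=[\Cov(u)-\Cov(u_h)]+[\Cov(u_h)-\Cov(U_h)]+[\Cov(U_h)-V_*]+[V_*-V_j],
\eenn
which is exactly the decomposition anticipated in the introduction with $V_h=\Cov(U_h)$. Before applying the triangle inequality I would use the embedding described at the start of this section to regard each of the finite-dimensional covariance matrices $\Cov(u_h)$, $\Cov(U_h)$, $V_*$ and $V_j$ as operators on $\dot{\cH}^1$ (extended by zero on the orthogonal complement of $\cS_h$), so that all four brackets live in $\cL_2(\cH,\cH)$ and the triangle inequality in $\|\cdot\|_{\cL_2}$ is meaningful.

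Next I would bound the four terms in turn. The first is immediate: Theorem~\ref{thm:goal1} gives $\sup_{t\in\cI}\|\Cov(u)-\Cov(u_h)\|_{\cL_2}\le C_\txtd h^{1+r}$, which is $\textnormal{err}_{\textnormal{(S1)}}$. For the second bracket I identify $z=u_h$ and $Z=U_h$, so that Theorem~\ref{thm:goal2} applies verbatim and yields $\textnormal{err}_{\textnormal{(S2)}}$. For the third bracket, $\Cov(U_h)=V(t)$ in the notation of Section~\ref{sec:Lyapunov}, so Theorem~\ref{thm:goal3} supplies $\|V(t)-V_*\|_2\le C_\tau\|V(0)-V_*\|_2\,\txte^{t\max(\textnormal{spec}(\cA))}$, i.e.~$\textnormal{err}_{\textnormal{(S3)}}$; here I would use that under \ref{a:operator} and \ref{a:spectrum} the matrix $\cA$ is symmetric with negative real spectrum, so $\min_i|\textnormal{Re}(\lambda_i)|=-\max(\textnormal{spec}(\cA))$. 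The fourth bracket is the stationary, $t$-independent ADI estimate of Theorem~\ref{thm:goal4}, giving $\textnormal{err}_{\textnormal{(S4)}}$. Finally I would take $\sup_{t\in[0,T]}$ of the sum; since (S1) is already uniform in $t$, (S4) is $t$-independent, and (S2)--(S3) carry explicit decaying exponentials against the non-decreasing $\eta^*(t)$, the supremum is controlled by the single bracket displayed in \eqref{eq:goalfinal}.

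The main obstacle is reconciling the norms and the spaces across the infinite- and finite-dimensional levels. Theorem~\ref{thm:goal1} is genuinely a Hilbert--Schmidt bound on $\cL_2(\cH,\cH)$, whereas Theorems~\ref{thm:goal2}--\ref{thm:goal4} are spectral ($2$-norm) estimates for $N\times N$ matrices, and for finite-rank operators one only has the convenient inequality $\|M\|_2\le\|M\|_{\cL_2}$, while the direction actually needed is $\|M\|_{\cL_2}\le\sqrt{\textnormal{rank}(M)}\,\|M\|_2$. I would therefore pass from the spectral bounds to Hilbert--Schmidt bounds on the three finite-rank brackets (S2)--(S4), each of rank at most $2N$, at the cost of a factor $\sqrt{2N}$; this factor is $h$-dependent but attaches only to the finite-dimensional constants $C_\txtl,C_\tau$ and to $\|V_*\|_2$ in (S4), leaving the genuinely infinite-dimensional rate $C_\txtd h^{1+r}$ of (S1) untouched, or equivalently one records the estimate with the mixed norms exactly as displayed in \eqref{eq:goalfinal}. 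A secondary point to verify is that the telescoping is legitimate once the matrices are embedded, i.e.~that the supports of $P_h$, $R_h$ and the space $\cS_h$ are consistent so that no cross terms are dropped; this follows directly from the definitions of $A_h$, $P_h$, $R_h$ in Section~\ref{sec:discretize}, since the discretized full, linearized, stationary and low-rank objects are all built on the same space $\cS_h$.
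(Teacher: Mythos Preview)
Your proposal is correct and follows essentially the same route as the paper: a telescoping decomposition with the three intermediate objects, a single triangle inequality, and then a direct invocation of Theorems~\ref{thm:goal1}--\ref{thm:goal4}. In fact you are more careful than the paper's own proof, which simply says ``apply the triangle inequality and cite the four theorems'' and does not address the $\|\cdot\|_{\cL_2}$ versus $\|\cdot\|_2$ mismatch you flag; your observation that one either absorbs a $\sqrt{2N}$ factor into the finite-dimensional constants or reads \eqref{eq:goalfinal} as a mixed-norm estimate is a genuine refinement rather than a deviation.
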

 
\begin{proof}
Just applying a triangle inequality to the left-hand side of~\eqref{eq:goalfinal}, the
proof follows from a direct application of 
Theorems~\ref{thm:goal1},~\ref{thm:goal2},~\ref{thm:truncate}, ~\ref{thm:goal3}, 
and~\ref{thm:goal4}.
\end{proof}

Theorem~\ref{thm:main} illustrates very well that it would be \emph{short-sighted} to just
look at one source of error. For example, even if the spatial discretization $h$ is
extremely small, the actual error could be very large if the spectral gap is small, i.e.,
the deterministic steady state is only weakly locally stable. We may also summarize the
behaviour of the different constants and terms in the CERES into more practical 
observations, which effects lead to \emph{smaller error}:

\begin{itemize}
 \item A large gap in the spectrum of the local linearization of the deterministic
part exists.
 \item A small spatial discretization level $h$ is chosen.
 \item One starts with initial data close to the local approximating OU stationary 
distribution.
 \item The nonlinear part of $f$ does not have a strong effect on sub-exponential time scales.
 \item The noise is small enough to stay for a long time in the sub-exponential regime.
 \item The $Q$-trace-class operator has fast decaying eigenvalues.
 \item The iteration number $j$ in the low-rank Lyapunov solver is chosen large.  
\end{itemize} 

For a detailed discussion of these effects, we refer to the individual proofs of the 
different parts of the CERES in previous sections. However, it is very interesting
to note that certain effects, which decrease the error occur in \emph{multiple steps}.
Obviously this is true for the spectral gap of $\cA=A_h+P_h\txtD_uf(u^*)$ in (S2) and 
(S4) but also for the type of noise, which crucially influences the, usually growing, function 
$\eta^*(t)$ in (S2) as well as the convergence rate of a low-rank approximation in (S5).
In addition, note that we have observed that several errors are \emph{linked} and cannot
be treated independently! For example, a small column dimension $R$ guarantees a good 
low rank approximation in (S5) but selecting $R$ small gives a larger error for the noise 
truncation in (S3).
 
\section{Outlook}
\label{sec:outlook}

We stress again that our results presented here should be viewed as a first key step to
introduce a general framework of CERES for high-dimensional stochastic problems, where
many different sources of error naturally occur. In this regard, a multitude of 
problems can, and should, now be tackled from a similar viewpoint. For example, 
uncertainty quantification of random partial differential 
equations (RPDE)~\cite{GhanemSpanos,XiuHesthaven,LemaitreKnio,NobileTemponeWebster} 
contains an entire chain of error sources such as truncation error for polynomial 
chaos, dynamical error if the RPDE is just an approximation, error from the large-scale
numerical linear algebra, and/or error due to reduced bases, just to name a few. Hence,
to compute a CERES in a single norm, such as the spectral norm we used here, for all
the steps would be very worthwhile. Similar issues also appear for problems involving
large deviations and transition paths in high-dimensional energy 
landscapes~\cite{BolhuisChandlerDellagoGeissler,ERenVanden-Eijnden,
LelievreStoltzRousset,HenkelmanUberuagaJonsson}, where developing a CERES would
definitely be very helpful. 

On the very concrete level of the SPDE~\eqref{eq:SPDE} studied here, several interesting 
directions could be pursued. Firstly, we
do not claim that all our estimates are sharp and/or the assumptions are the 
theoretically weakest possible. Already the CERES presented in this work is interesting 
and difficult to chain together properly from its different components. Nevertheless,
improvements might be possible, e.g., if $f$ is a strongly dissipative non-Lipschitz
nonlinearity, we expect that the results still hold from a dynamical perspective but
essentially the steps (S1)-(S2) would then require a major extension or even a 
completely new approach. 

Furthermore, it could be desirable to specify the constant
$C_\txtd$ precisely as the techniques in~\cite{Kruse,KruseLarsson} are more
explicit than the final statements on optimal regularity.
Unfortunately this would entail re-writing the entire optimal regularity proof
for the spatial discretization so we refrain from attempting to carry this program in
this work. Similar remarks also apply to other constants, which are expected to be
of moderate/non-asymptotic relevance only in many practical applications anyhow. 

\medskip
\textbf{Acknowledgments:} CK would like to thank the VolkswagenStiftung for
support via a Lichtenberg Professorship. Furthermore, CK would like to thank 
Daniele Castellano for interesting discussions regarding moment equations.
The predominant part of work on this article was done while PK was affiliated 
with the Max Planck Institute for Dynamics of Complex Technical Systems in Magdeburg, 
Germany.


\end{document}